\newenvironment{symenum}
 {\enumerate[label=(\noexpand\thisenumsymbol),align=parleft,labelindent=0pt,itemindent=0pt,labelsep=35pt,leftmargin=*]}
 {\endenumerate}
\newcommand\thisenumsymbol{}
\newcommand\itemsymbol[1]{%
  \renewcommand{\thisenumsymbol}{#1}%
  \item
}
\theoremstyle{plain}
\newtheorem{thm}{Theorem}
\newtheorem{lem}[thm]{Lemma}
\newtheorem{cor}[thm]{Corollary}
\newtheorem{conj}{Conjecture}
\newtheorem{qu}{Question}
\theoremstyle{definition}
\newtheorem{rem}[thm]{Remark}
\def\final{0}  
\def\iflong{\iffalse}
\newcommand{\knote}[1]{{\color{red}[{\tiny \textbf{Kristóf:} \bf #1}]\marginpar{\color{red}*}}}
\newcommand{\tnote}[1]{{\color{blue}[{\tiny \textbf{Tamás:} \bf #1}]\marginpar{\color{blue}*}}}
\newcommand{\knote}[1]{}
\newcommand{\tnote}[1]{}
\DeclareMathOperator*{\exx}{Ex}
\DeclareMathOperator*{\sbo}{SBO}
\DeclareMathOperator*{\bo}{BO}
\DeclareMathOperator*{\sbro}{SBRO}
\newcommand{\cB}{\mathcal{B}}
\newcommand{\cI}{\mathcal{I}}
\newcommand{\cH}{\mathcal{H}}
\newcommand{\cC}{\mathcal{C}}
\newcommand{\cM}{\mathcal{M}}
\newcommand{\exk}{\exx(M(K_4))}
\newcommand{\phiext}{\hat{\varphi}}
\title{Partitioning into common independent sets\\ via relaxing strongly base orderability}
\author{
Kristóf Bérczi\thanks{MTA-ELTE Momentum Matroid Optimization Research Group and ELKH-ELTE Egerváry Research Group, Department of Operations Research, Eötvös Loránd University, Budapest, Hungary. Email: \texttt{kristof.berczi@ttk.elte.hu, tamas.schwarcz@ttk.elte.hu}.}
\and
Tamás Schwarcz\footnotemark[1]
}
\date{}
\begin{document}
\maketitle

\begin{abstract}

The problem of covering the ground set of two matroids by a minimum number of common independent sets is notoriously hard even in very restricted settings, i.e.\ when the goal is to decide if two common independent sets suffice or not. Nevertheless, as the problem generalizes several long-standing open questions, identifying tractable cases is of particular interest. Strongly base orderable matroids form a class for which a basis-exchange condition that is much stronger than the standard axiom is met. As a result, several problems that are open for arbitrary matroids can be solved for this class. In particular, Davies and McDiarmid showed that if both matroids are strongly base orderable, then the covering number of their intersection coincides with the maximum of their covering numbers.

Motivated by their result, we propose relaxations of strongly base orderability in two directions. First we weaken the basis-exchange condition, which leads to the definition of a new, complete class of matroids with distinguished algorithmic properties. Second, we introduce the notion of covering the circuits of a matroid by a graph, and consider the cases when the graph is ought to be 2-regular or a path. We give an extensive list of results explaining how the proposed relaxations compare to existing conjectures and theorems on coverings by common independent sets. 
   
    \medskip

\noindent \textbf{Keywords:} Coverings, Excluded minors, Matroid intersection, Matroidally $k$-colorability, Strongly base orderable matroids
    
\end{abstract}

\section{Introduction} 
\label{sec:intro}

For basic definitions and notation of matroid theory, the interested reader is referred to \cite{oxley2011matroid}. Throughout the paper, we denote the ground set of a matroid $M$ by $E$ while the sets of independent sets, bases and circuits are denoted by $\cI$, $\cB$ and $\cC$, respectively. The \textbf{covering number} $\beta(M)$ of a matroid $M$ is the minimum number of independent sets needed to cover its ground set. A matroid is then called \textbf{$k$-coverable} if $\beta(M)\leq k$. Whenever investigating the covering number, we assume the matroid to be loopless as otherwise the ground set obviously cannot be covered by independent sets. The value of $\beta(M)$ can be determined using the rank formula of the union of matroids due to Edmonds and Fulkerson~\cite{edmonds1965transversals}. 

It is quite natural to consider an analogous notion for the intersection of two matroids. Given two matroids $M_1$ and $M_2$ on the same ground set, the \textbf{covering number $\beta(M_1\cap M_2)$ of their intersection} is the minimum number of common independent sets needed to cover the common ground set. Determining the exact value of $\beta(M_1\cap M_2)$ has been the center of attention for a long time since it generalizes a wide list of fundamental questions from both graph and matroid theory, including Woodall's conjecture~\cite{woodall1978menger} on the maximum number of pairwise disjoint dijoins in directed graphs, or Rota's basis conjecture~\cite{huang1994relations} on packing transversal bases. Nevertheless, apart from partial results such as K\H{o}nig's $1$-factorization theorem~\cite{konig1916graphen} or Edmonds' disjoint arborescences theorem~\cite{edmonds1973edge}, the problem remained open until recently, when the authors settled the complexity of the problem by showing hardness under the rank oracle model~\cite{berczi2021complexity}.

As determining the exact value of $\beta(M_1\cap M_2)$ is hard in general, the need for good lower and upper bounds arises. A lower bound is easy to give as $\beta(M_1\cap M_2)\geq\min\{\beta(M_1),\beta(M_2)\}$ always holds. Nevertheless, the equality $\beta(M_1\cap M_2) = \max\{\beta(M_1), \beta(M_2)\}$ does not necessarily hold for general matroids, as shown by the well-known example where $M_1$ is the graphic matroid of a complete graph on four vertices and $M_2$ is the partition matroid\footnote{All partition matroids considered in the paper have all-ones upper bounds on the partition classes without explicitly mentioning it.} defined by the partition of its edges into three matchings, see~\cite{schrijver2003combinatorial} for details. As for the upper bound, Aharoni and Berger~\cite{aharoni2006intersection} showed by using techniques from topology that $\beta(M_1\cap M_2)\leq 2\max\{\beta(M_1),\beta(M_2)\}$. Furthermore, they verified the slightly stronger statement that $\beta(M_1\cap M_2) \le \beta(M_1)+\beta(M_2)$ holds whenever one of $\beta(M_1)$ and $\beta(M_2)$ divides the other. Nevertheless, no example is known for which the true value would be close to the upper bound. In fact, Aharoni and Berger~\cite{aharoni2012edge} conjectured the following, originally attributed to \cite{aharoni2006intersection}.

\begin{conj}{\normalfont(Aharoni and Berger)}\label{conj:ab}
Let $M_1$ and $M_2$ be matroids on the same ground set.
\begin{enumerate}[label={(\arabic*)}]\itemsep0em
    \item If $\beta(M_1)\neq \beta(M_2)$, then $\beta(M_1\cap M_2)=\max\{\beta(M_1),\beta(M_2)\}$.
    \item If $\beta(M_1)= \beta(M_2)$, then $\beta(M_1\cap M_2)\leq \max\{\beta(M_1),\beta(M_2)\}+1$.
\end{enumerate} 
\end{conj}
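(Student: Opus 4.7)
The plan is to bootstrap from the Davies--McDiarmid theorem, which settles the conjecture when both $M_1$ and $M_2$ are strongly base orderable. Set $k = \max\{\beta(M_1), \beta(M_2)\}$ and choose covers $E = A_1 \sqcup \cdots \sqcup A_k$ of $M_1$-independent sets and $E = B_1 \sqcup \cdots \sqcup B_k$ of $M_2$-independent sets, padding with empty sets if necessary. The goal is then to recolor the $k^2$ cells $A_i \cap B_j$ using $k$ colors (for part~(1)) or $k+1$ colors (for part~(2)) so that each color class is simultaneously independent in $M_1$ and in $M_2$. For strongly base orderable matroids the simultaneous exchange axiom allows one to swap elements between two cells sharing a row or a column while preserving independence on both sides, and a potential-function argument converts any starting assignment into a valid one.

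For arbitrary matroids this exchange can fail, and the first step I would take is to isolate the precise exchange property actually used by the Davies--McDiarmid argument. The next step is to check whether every matroid satisfies some quantitatively weaker version of that property---weak enough to hold universally, but strong enough to yield the conjectured bound, possibly with the $+1$ slack in part~(2). This is exactly the program announced in the abstract: one seeks relaxations of strong base orderability that still permit a swapping argument, and one tries to show that either every matroid lies in the relaxed class, or that enough structural reductions allow one to restrict to that class.

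The principal obstacle is the extremal example of $M(K_4)$ together with the partition matroid into three matchings, which has $\beta(M_1) = \beta(M_2) = 2$ yet $\beta(M_1 \cap M_2) = 3$. It shows both that part~(2) is tight and that no purely local swap procedure can force the cover count down to $\max\{\beta(M_1), \beta(M_2)\}$ when the covering numbers coincide. Any proof must therefore couple a refined exchange axiom with a global accounting argument that decides when the extra set in part~(2) is unavoidable, and must break the symmetry between parts~(1) and (2) using the arithmetic gap $\beta(M_1)\neq \beta(M_2)$. Finally, since the authors have shown that $\beta(M_1 \cap M_2)$ is hard to compute in the rank oracle model, any successful argument cannot be turned into a polynomial-time algorithm, which suggests that a topological proof in the spirit of Aharoni--Berger, or a non-algorithmic structural reduction through the new matroid class of the paper, is essentially unavoidable.
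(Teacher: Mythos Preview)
The statement you are addressing is Conjecture~\ref{conj:ab}, not a theorem: the paper does \emph{not} contain a proof of it. The conjecture is open in general; the paper only records the known cases ($\beta(M_1)=\beta(M_2)=2$ due to Aharoni--Berger--Ziv) and later establishes new special cases (e.g.\ when $M_1$ is a $2$-coverable graphic, paving, or spike matroid and $M_2$ is a partition matroid, via Theorems~\ref{thm:graphic}--\ref{thm:spike} and Theorem~\ref{thm:partition}). There is therefore nothing to compare your proposal against.

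More importantly, your proposal is not a proof either. It restates the Davies--McDiarmid scheme, identifies the known obstruction $M(K_4)$, and then describes in programmatic terms what a hypothetical argument would have to accomplish (``isolate the precise exchange property'', ``check whether every matroid satisfies some quantitatively weaker version'', ``couple a refined exchange axiom with a global accounting argument''). None of these steps is carried out, and the paper itself shows that the most natural relaxations (the SBRO class, the properties \ref{prop:r}--\ref{prop:pp}) do \emph{not} hold for all matroids---$M(K_4)$, $J$, and $X_{10}$ are explicit counterexamples to SBRO---so the hope that ``every matroid lies in the relaxed class'' is already known to fail for the relaxations introduced here. In short, the conjecture remains open, and your text is a summary of the landscape rather than an argument that advances toward a proof.
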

The conjecture was verified only for $\beta(M_1)=\beta(M_2)=2$ by Aharoni, Berger and Ziv~\cite{aharoni2012edge}. In the same paper, the authors also showed that if $\beta(M_1)=2$ and $\beta(M_2) = 3$, then $\beta(M_1 \cap M_2) \le 4$ holds. For $\beta(M_1)=2$ and $\beta(M_2) = k \ge 4$, the current best bound follows from the result of Aharoni and Berger~\cite{aharoni2006intersection} mentioned above: $\beta(M_1 \cap M_2) \le k+2$ if $k$ is even, and $\beta(M_1 \cap M_2) \le k+3$ if $k$ is odd.   

Among the results related to Conjecture~\ref{conj:ab}, the probably most important one is due to Davies and McDiarmid~\cite{davies1976disjoint}, who studied the class of strongly base orderable matroids. A matroid is called \textbf{strongly base orderable} if
\begin{symenum}
\itemsep0em
    \itemsymbol{SBO}\label{prop:sbo} for any pair $A,B$ of bases, there exists a bijection $\varphi\colon A\setminus B \to B\setminus A$ such that $(A\setminus X)\cup \varphi(X)$ is a basis for every $X\subseteq A\setminus B$.
\end{symenum}
It is worth mentioning that this implies $(B\cup X)\setminus \varphi(X)$ being a basis as well. Strongly base orderable matroids are interesting and important because we have a fairly good global understanding of their structure, while frustratingly little is known about the general case. In particular, Davies and McDiarmid showed that the covering number of the intersection of two strongly base orderable matroids coincides with the obvious lower bound.

\begin{thm}{\normalfont(Davies and McDiarmid)}\label{thm:dm} 
Let $M_1$ and $M_2$ be strongly base orderable matroids on the same ground set. Then $\beta(M_1\cap M_2) = \max\{\beta(M_1), \beta(M_2)\}$.
\end{thm}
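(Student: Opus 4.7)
The plan is to prove the nontrivial inequality $\beta(M_1 \cap M_2) \leq k := \max\{\beta(M_1), \beta(M_2)\}$, the reverse bound being immediate. I would proceed by induction on $k$. The base case $k = 1$ is trivial, since $E$ is then independent in both matroids. For the inductive step, the strategy is to construct a single set $D \in \cI(M_1) \cap \cI(M_2)$ with $\beta(M_i \setminus D) \leq k-1$ for $i = 1, 2$. Because the class of strongly base orderable matroids is closed under deletion, the inductive hypothesis applied to $M_1 \setminus D$ and $M_2 \setminus D$ partitions $E \setminus D$ into $k-1$ common independent sets, and adjoining $D$ yields the required $k$-partition.

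To simplify the search for $D$, I would first reduce to a tight normal form. Adjoining parallel copies preserves \ref{prop:sbo}, so I may assume $r_{M_1}(E) = r_{M_2}(E) = r$ and $|E| = kr$, whereupon any $k$-cover of $M_i$ is automatically a partition into $k$ bases. Fix partitions $E = B_1 \sqcup \cdots \sqcup B_k$ into $M_1$-bases and $E = C_1 \sqcup \cdots \sqcup C_k$ into $M_2$-bases. The \ref{prop:sbo} property in $M_1$ then yields, for every pair $i, i'$, a bijection $\varphi^1_{i,i'}\colon B_i \to B_{i'}$ such that every subset-exchange is basis-preserving; analogously one obtains bijections $\varphi^2_{j,j'}\colon C_j \to C_{j'}$ in $M_2$.

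The construction of $D$ can then be reformulated as a matching problem. I would build an auxiliary bipartite graph on vertex classes $[k] \sqcup [k]$ with edges indexed by elements of $E$, an element $e \in B_i \cap C_j$ giving an edge between $i$ and $j$. A suitable system of distinct representatives, one element per $B_i$ and per $C_j$, becomes $D$; one must show that the representative system can be chosen so that $D$ is independent in both matroids and its complement is partitionable into $k-1$ bases in each matroid via iterated application of the $\varphi^1$ and $\varphi^2$ swaps. Existence of the matching reduces to a Hall-type condition that is verified by a coupled exchange argument: any alleged deficiency is transported by $\varphi^1$ into an $M_2$-configuration that must be consistent with the $\varphi^2$-bijections, and the strong subset-form of \ref{prop:sbo} ultimately forces a contradiction.

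The main obstacle is this coupled-exchange verification. The \ref{prop:sbo} axiom concerns a single matroid, and its direct use in $M_1$ offers no control over $M_2$; the delicate point is to design the auxiliary graph so that the two families of bijections become compatible, and then to propagate an alleged obstruction between the matroids through repeated simultaneous subset-exchanges. It is precisely here that \ref{prop:sbo} is decisively stronger than the ordinary basis-exchange axiom: the ability to exchange entire subsets at once, rather than single elements, is what lets the $M_1$- and $M_2$-bijections be woven together without breaking either matroid's base structure.
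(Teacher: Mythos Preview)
Your outline takes a genuinely different route from the paper's, but as written it has a concrete gap.

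\textbf{The gap.} In your normalized setting $|E|=kr$ and the goal, for induction on $k$, is to find a common independent set $D$ with $\beta(M_i\setminus D)\le k-1$. A simple count shows this forces $|D|\ge r$: since $r(M_i\setminus D)\le r$, covering $E\setminus D$ by $k-1$ independent sets needs $|E\setminus D|\le (k-1)r$, i.e.\ $|D|\ge r$, with equality only if $D$ is a common \emph{basis} and $E\setminus D$ partitions into $k-1$ bases of each $M_i$. But your auxiliary bipartite graph on $[k]\sqcup[k]$ with one edge per element and your proposed ``system of distinct representatives, one element per $B_i$ and per $C_j$'' produces a set $D$ of size $k$, not $r$; these agree only in the accidental case $k=r$. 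So the concrete construction does not yield the object the induction needs. Even if you aimed instead for a size-$r$ common basis $D$, you would further need that removing $D$ leaves a partition into $k-1$ bases in \emph{each} matroid, which is essentially the full strength of the theorem and is not delivered by a Hall argument on that $k\times k$ graph. Your final paragraph candidly flags the ``coupled-exchange verification'' as the main obstacle, but it is never carried out; at present it is an aspiration, not an argument.

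\textbf{What the paper does.} The paper proves the statement (in the more general SBRO form, Theorem~\ref{thm:sbro}) by a short extremal argument, not by induction on $k$. After the same normalization, fix basis partitions $\mathcal X=\{X_1,\dots,X_k\}$ of $M_1$ and $\mathcal Y=\{Y_1,\dots,Y_k\}$ of $M_2$ maximizing $\sum_i |X_i\cap Y_i|$. If $\mathcal X\ne\mathcal Y$, pick $i\ne j$ with $X_i\cap Y_j\ne\emptyset$, take the SBO bijections $\varphi_1\colon X_i\to X_j$ and $\varphi_2\colon Y_i\to Y_j$, and form the graph on $X_i\cup X_j\cup Y_i\cup Y_j$ whose edge set is the union of the two perfect matchings $\{e\varphi_1(e)\}$ and $\{e\varphi_2(e)\}$. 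This graph is bipartite; a $2$-coloring with classes $S,T$ yields new bases $S\cap(X_i\cup X_j)$, $T\cap(X_i\cup X_j)$ of $M_1$ and $S\cap(Y_i\cup Y_j)$, $T\cap(Y_i\cup Y_j)$ of $M_2$ (exactly by the subset-exchange form of \ref{prop:sbo}), and the overlap strictly increases, contradicting maximality. The crucial use of \ref{prop:sbo} is precisely that \emph{every} color class obtained from such a $2$-coloring is a basis; no Hall-type or inductive machinery is needed.
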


As many matroid classes that naturally appear in combinatorial and graph optimization problems, e.g.\ gammoids, are strongly base orderable, Theorem~\ref{thm:dm} was a milestone result in the research on packing common independent sets. Recently, the theorem received a renewed interest when Abdi, Cornu\'ejols and Zlatin~\cite{abdi2022packing} successfully attacked special cases of Woodall's conjecture with its help. However, there are basic matroid classes that do not satisfy strongly base orderability, e.g.\ graphic or paving matroids.   

\paragraph{Our contribution.} Our research was motivated by the following question: can strongly base orderability in Theorem~\ref{thm:dm} replaced with some weaker assumption so that the statement remains true? Or more generally, can we verify Conjecture~\ref{conj:ab} for some reasonably broad class of matroids? As partial answers to these problems, we propose two relaxations of strongly base orderability that weaken the original definition in different aspects. 

First, in Section~\ref{sec:sbro} we omit the condition for the bijection to go between $A\setminus B$ and $B\setminus A$, and allow repartitioning the multiset $A\cup B$ into two new bases $A'$ and $B'$ satisfying $A'\cap B'=A\cap B$ and $A'\cup B'=A\cup B$. We show that matroids satisfying the relaxed condition form a complete class. We also prove that Theorem~\ref{thm:dm} remains true when both matroids are from the proposed class, and thus we identify new scenarios when packing problems are tractable. Finally, we explain how the new class fits in the hierarchy of existing matroid classes.

Second, in Section~\ref{sec:path} we relax the condition of finding a bijection between $A\setminus B$ and $B\setminus A$. Instead, we seek for a graph whose vertex set coincides with the symmetric difference of the bases, and the graph represents the matroid locally in the sense that every stable set in it corresponds to an independent set of the original matroid. In particular, we prove that the the graph in question can be chosen to be a path for graphic matroids, paving matroids, and spikes. We conjecture that an analogous statement hold for arbitrary matroids, and discuss a series of corollaries that would follows from such a result. 

\section{Relaxing the fixed bases: strongly base reorderable matroids} 
\label{sec:sbro}

When considering the extendability of Theorem~\ref{thm:dm} to a broader class of matroids, some natural candidates are immediate. For subsets $X,Y\subseteq E$, their \textbf{symmetric difference} is defined as $X\triangle Y\coloneqq (X\setminus Y)\cup (Y\setminus X)$. When $Y$ consist of a single element $y$, then $X\setminus \{y\}$ and $X\cup\{y\}$ are abbreviated as $X-y$ and $X+y$, respectively. A matroid is called \textbf{base orderable} if 
\begin{symenum}
\itemsep0em
    \itemsymbol{BO}\label{prop:bo} for any pair $A,B$ of bases, there exists a bijection $\varphi\colon A\setminus B \to B\setminus A$ such that $A-x+\varphi(x)$ and $B+x-\varphi(x)$ are bases for every $x\in A\setminus B$.
\end{symenum}
Clearly, strongly base orderable matroids are also base orderable since the bijection appearing in \ref{prop:sbo} satisfies \ref{prop:bo} as well. It is known that $M(K_4)$, the graphic matroid of a complete graph on four vertices is a forbidden minor for base orderability. Hence every base orderable matroid is contained in the class $\exk$ of $M(K_4)$-minor-free matroids. In fact, these inclusions are known to be strict, hence $\sbo\subsetneq\bo\subsetneq\exk$ hold. 

Davies and McDiarmid~\cite{davies1976disjoint} posed the problem of whether Theorem~\ref{thm:dm} remains true if strongly base orderability is replaced with the weaker assumption that both matroids are base orderable. Though this specific question remains open, the following example shows that both matroids being in $\exk$ does not suffice. The examples uses $J$, a self-dual rank-$4$ matroid introduced by Oxley~\cite{oxley1987characterization}.

\begin{rem}\label{rem:antidm}
\normalfont
We show that there exists a partition matroid $M$ for which $\beta(J)=\beta(M)=2$ and $\beta(J\cap M)=3$. Consider the geometric representation of the matroid $J$ on Figure~\ref{fig:J}; for the definition, see also \cite[page 650]{oxley2011matroid}. Let $M$ be the partition matroid defined by partition classes $\{a,h\}, \{b,g\}, \{c,e\}, \{d,f\}$, implying $\beta(J)=\beta(M)=2$.

To see that $\beta(J\cap M) > 2$, suppose to the contrary that $E=B_1 \cup B_2$ is a decomposition into two common bases of $J$ and $M$. Without loss of generality, we may assume that $h \in B_1$. As $B_1$ is a common basis, it contains exactly one element from each of the pairs $\{b,g\}$, $\{c,e\}$, $\{d,f\}$ and at most one element from each of the pairs $\{b,c\}$, $\{d,g\}$, $\{e,f\}$, hence $B_1 = \{b,d,e,h\}$ or $B_1 = \{c,f,g,h\}$. In either case, $B_2$ is not a basis of $J$, a contradiction. Therefore $\beta(J\cap M)>2$, and so $\beta(J\cap M)= 3$ by the result of Aharoni, Berger and Ziv mentioned earlier.  
\end{rem}

\begin{figure} \centering 
\includegraphics[width=0.5\textwidth]{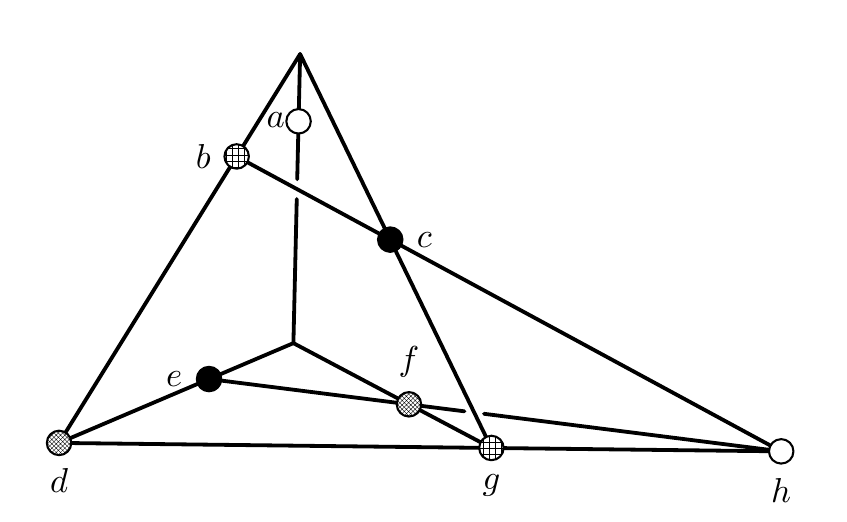}
\caption{Geometric representation of the matroid $J$: bases are the sets of size four which do not lie on a plane. If $M_1= J$ and $M_2$ is the partition matroid defined by partition classes $\{a,h\}, \{b,g\}, \{c,e\}, \{d,f\}$ with upper bounds one, then $\beta(M_1)=\beta(M_2)=2$ and $\beta(M_1\cap M_2)=3$.}
\label{fig:J}
\end{figure}

Motivated by the proof of Theorem~\ref{thm:dm}, we call a matroid \textbf{strongly base reorderable} (or SBRO for short) if 
\begin{symenum}
\itemsep0em
    \itemsymbol{SBRO}\label{prop:sbro} for any pair $A,B$  of bases, there exists bases $A'$, $B'$ and a bijection $\varphi\colon A'\setminus B' \to B'\setminus A'$ such that $A' \cap B' = A\cap B$, $A' \cup B' = A \cup B$, and $(A'\setminus X)\cup\varphi(X)$ is a basis for every $X \subseteq A'\setminus B'$. 
\end{symenum}
In other words, \ref{prop:sbro} differs from \ref{prop:sbo} in that it allows the repartitioning of the multiunion of the bases before asking for the bijection $\varphi$. It immediately follows from the definition that strongly base orderable matroids are strongly base reorderable as well.

\subsection{Completeness and intersection}

Ingleton~\cite{ingleton1977transversal} defined a class of matroids to be \textbf{complete} if it is closed under taking minors, duals, direct sums, truncations and induction by directed graphs. A complete class is also closed under many other matroid operations, such as series and parallel connections, 2-sums, unions and principal extensions, see e.g.~\cite{bonin2016infinite}. It was already noted by Ingleton~\cite{ingleton1977transversal} that the classes of base orderable and strongly base orderable matroids are complete, while Sims~\cite{sims1977complete} verified that $\exk$ is complete as well. 

A \textbf{$k$-exchange-ordering} for a pair $A, B$ of bases is a bijection $\varphi\colon A\setminus B \to B \setminus A$ such that both $(A\setminus X)\cup \varphi(X)$ and $(B \setminus \varphi(X)) \cup X$ are bases for each subset $X \subseteq A\setminus B$ with $|X| \le k$. A matroid is called \textbf{$k$-base-orderable} if each pair of its bases has a $k$-exchange-ordering. Note that $k$-base-orderability corresponds to base orderability for $k=1$ and to strongly base orderability if $k$ is the rank of the matroid. Bonin and Savitsky~\cite{bonin2016infinite} showed that the class of $k$-base-orderable matroids is complete for any fixed $k \ge 1$. Relying on their proof, we verify that the class $\sbro$ of strongly base reorderable matroids is complete as well. Following the notation of \cite{oxley2011matroid}, the \textbf{contraction} and \textbf{deletion} of a set $X$ in a matroid $M$ are denoted by $M/X$ and $M\backslash X$, respectively.

\begin{thm}\label{thm:complete}
$\sbro$ is a complete class.
\end{thm}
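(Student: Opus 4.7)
The plan is to verify that $\sbro$ is closed under each of the five operations in Ingleton's definition of completeness --- taking minors, duals, direct sums, truncations, and induction by a directed graph --- following the blueprint of Bonin and Savitsky's treatment of $k$-base-orderability~\cite{bonin2016infinite}. A useful preliminary observation, which I would record once at the outset, is that \ref{prop:sbro} automatically implies its symmetric form: whenever $A',B',\varphi$ witness \ref{prop:sbro} for a pair $A,B$, the set $(B'\setminus\varphi(X))\cup X$ is also a basis, because it coincides with $(A'\setminus X')\cup\varphi(X')$ for the complementary subset $X'=(A'\setminus B')\setminus X$.

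The easy operations can be dispatched rapidly. For duality, apply \ref{prop:sbro} in $M$ to the complements of the given dual bases; taking complements of the output and inverting $\varphi$ produces the required witness in $M^*$, with the symmetric form above making the exchange axiom transfer. Direct sums are handled componentwise. For deletion, a non-coloop simply restricts \ref{prop:sbro}, since bases of $M\setminus e$ are exactly the bases of $M$ avoiding $e$; a coloop is forced into the intersection of every \ref{prop:sbro}-repartition in $M$ and can then be removed. Contraction then follows from $M/e=(M^*\setminus e)^*$.

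The substantial cases are truncation and induction by a digraph. For $T(M)$, given bases $A,B$ (independent $(r(M)-1)$-subsets of $M$), I would first try to pick a common extender $e\notin\text{span}_M(A)\cup\text{span}_M(B)$ and apply \ref{prop:sbro} to $A+e,B+e$ in $M$; since $(A+e)\cap(B+e)=(A\cap B)+e$, the resulting $A'',B''$ contain $e$ in their intersection, and $A''-e,B''-e$ together with the restricted bijection yield the \ref{prop:sbro}-witness in $T(M)$. When no common extender exists, i.e.\ $\text{span}_M(A)\cup\text{span}_M(B)=E$, I would pick $a\notin\text{span}_M(A)$ and $b\notin\text{span}_M(B)$ (necessarily distinct, with $a\in\text{span}_M(B)$ and $b\in\text{span}_M(A)$), apply \ref{prop:sbro} to $A+a,B+b$, and then remove $a$ from $A''$ and $b$ from $B''$; since the resulting bijection in $M$ need not send $a$ to $b$, I would absorb the discrepancy by a local swap and verify, through a case split on whether the preimage of $b$ lies in the given subset $X$, that the modified exchanges remain basis exchanges in $T(M)$. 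For induction by a directed graph, I would lift the two bases of the induced matroid through vertex-disjoint directed paths to bases of the ambient matroid in $\sbro$, apply \ref{prop:sbro} there, and transport the exchange structure back along the paths.

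The main obstacle is this last alignment step: \ref{prop:sbro} in $M$ yields only \emph{some} bijection, with no a priori control over the images of the distinguished elements $a,b$, and in the digraph setting the choice of lift and the choice of repartition inside $M$ must be made consistently. For truncation, the local-swap idea is expected to go through because we are free to replace $X$ by $X\cup\{a\}$ when invoking \ref{prop:sbro}; for digraph induction, I anticipate that the simultaneous-choice strategy used by Bonin and Savitsky carries over, with the added flexibility coming from the reordering clause of \ref{prop:sbro}.
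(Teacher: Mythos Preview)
Your handling of minors, duals, and direct sums matches the paper's. The divergence is in the remaining operations: the paper does not verify Ingleton's five conditions directly but instead invokes Bonin and Savitsky's alternative criterion that a class is complete once it is closed under minors, duals, direct sums, and \emph{principal extensions into flats}. This replaces both truncation and digraph induction by a single operation, and the paper dispatches it in one paragraph by quoting a lemma already implicit in~\cite{bonin2016infinite}: a $k$-exchange-ordering for bases $A_0,B_0$ of $M$ lifts to one for $A_0+e-x$, $B_0+e-y$ in $M+_Fe$. Since an \ref{prop:sbro}-witness $(A',B',\varphi)$ is exactly an $r$-exchange-ordering on the repartitioned pair, that lemma applies verbatim, and the repartitioning survives because $A_0+e-x$ and $B_0+e-y$ still have the required intersection and union with the original bases of $M+_Fe$.

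Your direct route has a concrete gap in the truncation step. When no common extender exists and you apply \ref{prop:sbro} in $M$ to $A+a,\,B+b$, the output repartition $A'',B''$ is only constrained by $A''\cap B''=(A+a)\cap(B+b)$ and $A''\cup B''=(A+a)\cup(B+b)$; nothing forces $a\in A''$ or $b\in B''$, so ``remove $a$ from $A''$ and $b$ from $B''$'' may not even be well-posed---both elements could sit in $A''\setminus B''$. This is reparable (swapping $a$ with $\varphi(a)$ yields another \ref{prop:sbro}-witness with $a$ moved to the other side), but you did not address it. The more serious issue is digraph induction: your appeal to a ``simultaneous-choice strategy used by Bonin and Savitsky'' is misplaced, since their treatment of $k$-base-orderability proceeds precisely through the principal-extension reduction and does not handle digraph induction head-on. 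Without that reduction, transporting an \ref{prop:sbro}-witness back through a system of vertex-disjoint linkings---where the repartitioning in the ambient matroid may scramble which lifted elements correspond to which original ones---is a genuine difficulty, and the one sentence you devote to it does not indicate how it would close.
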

\begin{proof}
Bonin and Savitsky~\cite{bonin2016infinite} verified that a class of matroids is complete if it is closed under taking minors, duals, direct sums and principal extensions into flats. For a flat $F$ of a matroid $M$ over ground set $E$ and an element $e$ not in $E$,  the \textbf{principal extension} $M+_{F} e$ is a matroid on ground set $E+e$ with family of bases \[\{B+e-f \mid B\text{ is a basis of $M$, } f \in (B \cap F) + e\}.\]
The following statement was implicitly showed in \cite{bonin2016infinite} in the proof that the class of $k$-base-orderable matroids is closed under principal extensions.
\begin{lem} \label{lem:principal}
Let $k$ be a positive integer, $F$ be a flat of a matroid $M$ over ground set $E$, and $e$ be an element not in $E$. If $\varphi_0$ is a $k$-exchange-ordering for bases $A_0$ and $B_0$ of $M$, then for each $x \in (A_0 \cap F)+e$ and $y \in (B_0 \cap F)+e$, there exists a $k$-exchange-ordering for bases $A_0 + e - x$ and $B_0 + e - y$ of $M+_{F} e$.   
\end{lem}

We prove Theorem~\ref{thm:complete} by showing that $\sbro$ is closed under taking direct sums, duals, minors and principal extensions. Closedness under taking direct sums is straightforward to verify. 

To prove closedness under taking duals, let $A^*$ and $B^*$ be bases of the dual $M^*$ of an SBRO matroid $M$. Since $A \coloneqq E\setminus A^*$ and $B \coloneqq E\setminus B^*$ are bases of $M$, there exist bases $A', B'$ and a bijection $\varphi\colon A' \setminus B' \to B' \setminus A'$ such that $A' \cap B' = A \cap B$, $A' \cup B' = A \cup B$, and $(A' \setminus X) \cup \varphi(X)$ is a basis of $M$ for every $X \subseteq A' \setminus B'$. Then $A'^{*} \coloneqq E \setminus A'$ and $B'^{*} \coloneqq E \setminus B'$ are bases of $M^*$ such that $A'^{*} \cap B'^{*} = A^* \cap B^*$, $A'^* \cup B'^* = A\cup B$, and $\varphi^{-1}$ is a bijection from $B' \setminus A' = A'^{*} \setminus B'^{*}$ to $A' \setminus B' = B'^{*} \setminus A'^{*}$ such that for every $Y \subseteq B' \setminus A'$ the set $(A'^{*} \setminus Y) \cup \varphi^{-1}(Y)$ is a basis of $M$ since $E \setminus \left((A'^{*} \setminus Y) \cup \varphi^{-1}(Y)\right) = (A'\cup Y) \setminus \varphi^{-1}(Y) = (A'\setminus \varphi^{-1}(Y)) \cup \varphi\left(\varphi^{-1}(Y)\right)$ is a basis of $M$.

Next we show closedness under taking minors. Observe that the matroid $M \backslash x$ obtained from an SBRO matroid $M$ by deleting an element $x$ is SBRO. This follows from the fact that if $x$ is not a coloop of $M$, then each basis of $M \backslash x$ is a basis of $M$, while if $x$ is a coloop of $M$, then $B+x$ is a basis of $M$ for every basis $B$ of $M\backslash x$. As we have already seen that $\sbro$ is closed under taking duals, it follows that the contraction $M/x = (M^*\backslash x)^*$ is SBRO as well.     

Finally, we show closedness under principal extensions. Let $F$ be a flat of an SBRO matroid $M$, and consider a pair $A, B$ of bases of $M+_{F} e$. By the definition of the principal extension there exist $x \in F \cap A+e$ and $y \in F\cap B + e$ such that $A+x-e$ and $B+y-e$ are bases of $M$. Since $M$ is SBRO, there exist a pair $A_0, B_0$ and a bijection $\varphi_0\colon A_0 \setminus B_0 \to B_0 \setminus A_0$ such that $A_0 \cap B_0 = (A+x-e) \cap (B+y-e)$, $A_0 \cup B_0 = (A+x-e) \cup (B+y-e)$ and $(A_0 \setminus X) \cup \varphi(X)$ is a basis of $M$ for every $X \subseteq A_0 \setminus B_0$. Since $\varphi_0$ is a $k$-exchange ordering for $k$ being the rank of $M$, there exists a $k$-exchange ordering $\varphi$ for the bases of $A' \coloneqq A_0+e-x$ and $B' \coloneqq B_0+e-y$ of $M+_F e$ by Lemma~\ref{lem:principal}. As $A' \cap B' = A \cap B$ and $A' \cup B' = A \cup B$ hold, the pair $A', B'$ and the bijection $\varphi$ satisfies the requirements of \ref{prop:sbro}. 
\end{proof}

With the help of Theorem~\ref{thm:complete}, we are ready to prove the next theorem which shows the importance of the proposed class. Though the proof is analogous to that of Theorem~\ref{thm:dm} appearing in~\cite[Theorem~42.13]{schrijver2003combinatorial}, we include it to make the paper self-contained. 

\begin{thm}\label{thm:sbro} Let $M_1$ and $M_2$ be strongly base reorderable matroids on the same ground set. Then $\beta(M_1\cap M_2) = \max\{\beta(M_1),\beta(M_2)\}$.
\end{thm}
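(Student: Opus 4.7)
The bound $\beta(M_1\cap M_2)\ge\max\{\beta(M_1),\beta(M_2)\}$ is immediate since every common independent set is independent in each $M_i$. To prove the reverse bound, set $k\coloneqq\max\{\beta(M_1),\beta(M_2)\}$. Following the strategy of Davies-McDiarmid, the first step is a reduction to the \emph{balanced} setting in which $r(M_1)=r(M_2)=r$, $|E|=kr$, and $E$ partitions into exactly $k$ bases of each matroid. This is achieved by direct-summing each $M_i$ with appropriate free matroids and uniform matroids (all of which are SBO, hence SBRO); the resulting extensions on a common enlarged ground set remain SBRO by Theorem~\ref{thm:complete}, and any decomposition of the extended set into $k$ common bases restricts to a decomposition of $E$ into $k$ common independent sets.

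In the balanced setting we argue by induction on $k$, the base case $k=1$ being vacuous. For the inductive step, denote the given partitions by $A_1,\dots,A_k$ ($M_1$-bases) and $B_1,\dots,B_k$ ($M_2$-bases). It suffices to find a common basis $C$ of $M_1$ and $M_2$ together with updated partitions in which $C$ appears as one class: then $E\setminus C$ admits partitions into $k-1$ bases of each $M_i|_{E\setminus C}$, both SBRO by Theorem~\ref{thm:complete}, and the induction hypothesis yields the remaining $k-1$ common bases. To construct $C$ we repeatedly apply \ref{prop:sbro} to disjoint pairs of bases within one of the two matroids. Applied to a pair $A_i,A_{i'}$, the SBRO property yields a reshuffled pair $A_i',A_{i'}'$ with $A_i'\cup A_{i'}'=A_i\cup A_{i'}$ (still disjoint, hence still a valid $M_1$-partition after the replacement), together with a bijection $\varphi\colon A_i'\to A_{i'}'$ of SBO strength that permits arbitrary subset exchanges between the two bases while preserving basis-hood. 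Using such exchanges, and analogous ones produced by \ref{prop:sbro} on the $M_2$-side, we iteratively drive up the maximum overlap $\max_{i,j}|A_i\cap B_j|$; when this quantity reaches $r$, some $A_i$ equals some $B_j$ and we set $C$ to be this set.

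The main obstacle — and the precise place where the weaker SBRO assumption has to be tested — is showing that the iteration terminates, i.e., that a suitable monovariant (such as $\max_{i,j}|A_i\cap B_j|$, or a refinement thereof) can always be strictly improved by some SBRO-based exchange away from the terminal configuration. The argument here mirrors the SBO analysis in Schrijver's proof of Theorem~\ref{thm:dm} nearly verbatim, because each application of \ref{prop:sbro} returns exactly the data needed to execute one Davies-McDiarmid exchange: an SBO-strength bijection on a disjoint pair of bases. The additional freedom granted by SBRO to reshuffle the pair before producing the bijection never disturbs the global partition structure, precisely because $A_i'\cup A_{i'}'=A_i\cup A_{i'}$ guarantees that no element migrates outside the pair being updated.
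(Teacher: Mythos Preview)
Your proposal correctly identifies the reduction to the balanced setting and the key insight that \ref{prop:sbro} hands you, after a harmless reshuffle, an SBO-strength bijection on a disjoint pair of bases. However, the proposal stops exactly where the actual work begins: you name ``the main obstacle'' and then resolve it by citation (``mirrors the SBO analysis in Schrijver's proof \dots\ nearly verbatim''). The paper's proof is short precisely because that step \emph{is} the proof; omitting it leaves only an outline. Concretely, what is missing is the bipartite $2$-colouring argument: given the bijections $\varphi_1$ on the reshuffled pair $X'_i,X'_j$ and $\varphi_2$ on $Y'_i,Y'_j$, one forms the graph on $X_i\cup X_j\cup Y_i\cup Y_j$ whose edges are the two matchings, observes it is bipartite, and uses the two colour classes $S,T$ to define new bases $X''_i,X''_j,Y''_i,Y''_j$. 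This is the only place where the SBO-strength of the bijections is actually exploited, and you never invoke it.

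A second issue is your choice of monovariant. You suggest driving up $\max_{i,j}|A_i\cap B_j|$, but the bipartite argument does not increase the maximum: it yields $|X''_i\cap Y''_i|+|X''_j\cap Y''_j|=|(X_i\cup X_j)\cap(Y_i\cup Y_j)|>|X_i\cap Y_i|+|X_j\cap Y_j|$, which improves the \emph{sum} $\sum_i|X_i\cap Y_i|$ (this is the monovariant in the paper), while the maximum over the two affected pairs may well drop. Your hedge ``or a refinement thereof'' is doing a lot of work here; the correct monovariant is the sum, and once it is maximal all $X_i$ equal the corresponding $Y_i$ simultaneously, so the induction on $k$ (peeling off one common basis at a time) is unnecessary.
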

\begin{proof}
Let $k = \max \{\beta(M_1), \beta(M_2)\}$ and $r = \max\{r(M_1), r(M_2)\}$. We may assume that the common ground set $E$ of the matroids decomposes into $k$ disjoint bases in each of the matroids $M_1$ and $M_2$. Indeed, for $i=1,2$, consider the matroid $M'_i$ obtained as the $r$-truncation of the direct sum of $M_i$ and the free matroid on $k\cdot r-|E|$ elements. Then $M'_i$ is SBRO by Theorem~\ref{thm:complete} and its ground set decomposes into $k$ disjoint bases for $i=1,2$,  and $\beta(M'_1 \cap M'_2) = \beta(M_1 \cap M_2)$ holds as well.

Let $\mathcal{X} = \{X_1, \dots, X_k\}$ and $\mathcal{Y} = \{Y_1, \dots, Y_k\}$ be partitions of $E$ into bases of $M_1$ and $M_2$, respectively, such that $\sum_{i=1}^k |X_i \cap Y_i|$ is as large as possible. If it has value $|E|$, then $\mathcal{X} = \mathcal{Y}$ is a decomposition of $E$ into $k$ common bases of $M_1$ and $M_2$. Otherwise, there exist distinct indices $i$ and $j$ such that $X_i \cap Y_j \ne \emptyset$. 
Let $X'_i$ and $X'_j$ be bases of $M_1$ and $\varphi_1\colon X'_i \to X'_j$ be a bijection guaranteed by \ref{prop:sbro} for the disjoint bases $X_i$ and $X_j$ of $M_1$. Similarly, let $Y'_i$ and $Y'_j$ be bases of $M_2$ and $\varphi_2\colon Y'_i \to Y'_j$ be a bijection guaranteed by \ref{prop:sbro} for the disjoint bases $Y_i$ and $Y_j$ of $M_2$. Consider the graph $G$ on $X_i \cup X_j \cup Y_i \cup Y_j$ with edge set $\{e\varphi_1(e) \mid e \in X'_i\} \cup \{e\varphi_2(e) \mid e \in Y'_i\}$. Since $G$ is the union of two matchings, it is bipartite. Let $S$ and $T$ denote the color classes of a 2-coloring of the vertices of $G$. As $(X'_i \setminus Z) \cup \varphi_1(Z)$ is a basis of $M_1$ for every $Z \subseteq X'_i$ and $(Y'_i \setminus Z) \cup \varphi_2(Z)$ is a basis of $M_2$ for every $Z \subseteq Y'_i$, $X''_i \coloneqq S\cap (X_i \cup X_j)$ and $X''_j \coloneqq T \cap (X_i \cup X_j)$ are bases of $M_1$, while $Y''_i \coloneqq S \cap (Y_i \cup Y_j)$ and $Y''_j \coloneqq T \cap (Y_i \cup Y_j)$ are bases of $M_2$. Consider the partition $\mathcal{X}'$ obtained from $\mathcal{X}$ by replacing $X_i$ and $X_j$ by $X''_i$ and $X''_j$, respectively, and the partition $\mathcal{Y}'$ obtained from $\mathcal{Y}$ by replacing $Y_i$ and $Y_j$ by $Y''_i$ and $Y''_j$, respectively. Then $|X''_i \cap Y''_i| + |X''_j \cap Y''_j| = |(X_i \cup X_j) \cap (Y_i \cup Y_j)| > |X_i \cap Y_i|  + |X_j \cap Y_j|$, hence $\mathcal{X}'$ and $\mathcal{Y}$' contradict the maximality of $\sum_{i=1}^k |X_i \cap Y_i|$.
\end{proof}

\subsection{Binary and ternary SBRO matroids}

In general, knowing the excluded minors for a minor-closed matroid class provides a powerful tool that then can be used in various applications.  Based on the characterization of $M(K_4)$-minor-free binary matroids by Brylawski~\cite{brylawski1971combinatorial} and of $M(K_4)$-minor-free ternary matroids by Oxley~\cite{oxley1987characterization}, we give a characterization of binary and of ternary SBRO matroids. For the latter we will use that the matroid $S(5,6,12)$ is SBRO. The \textbf{Steiner system $S(5,6,12)$} is a family of 6-element subsets of a 12-element ground set such that each 5-element subset of the ground set is contained in exactly one member of the family. By slight abuse of notation, the matroid whose family of hyperplanes is identical with such a system is denoted by $S(5,6,12)$ as well.

\begin{lem}\label{lem:issbro}
    $S(5,6,12)$ is SBRO.
\end{lem}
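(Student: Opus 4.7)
My plan is to use the design-theoretic structure of $S(5,6,12)$: it is a rank-6 paving matroid on 12 elements whose 132 circuit-hyperplanes are exactly the Steiner blocks, and every 5-subset of the ground set lies in a unique such block. In particular, every 4-subset is independent, every 5-subset is independent, and a 6-subset is a basis iff it is not a block. Given bases $A$ and $B$, I would set $k = |A \setminus B|$ and argue by cases on $k$.

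For $k \in \{0,1\}$ the property is trivial: take $A' = A$, $B' = B$ and the unique (possibly empty) bijection. For $k \ge 2$, the usefulness of \ref{prop:sbro} over \ref{prop:sbo} is that we are free to choose any partition of $A \triangle B$ into two halves $X_1, X_2$ of size $k$, declare $A' = (A\cap B) \cup X_1$ and $B' = (A\cap B)\cup X_2$, and then any bijection $\varphi\colon X_1 \to X_2$. Hence the task reduces to: find such $X_1, X_2$ with $A',B'$ bases, and a bijection $\varphi$ so that $(A'\setminus X) \cup \varphi(X)$ avoids being a block for every $X \subseteq X_1$. The key combinatorial fact is that a single swap $A'-a+b$ is a non-basis iff $b$ is the unique element completing the 5-subset $A'-a$ to its containing block; so the "forbidden" partner of each $a \in X_1$ is a single element $c_a \in E \setminus (A'-a)$. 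For $|X| \ge 2$, non-basis exchanges are controlled by the much smaller number of blocks containing the residual 4-or-fewer elements $A' \setminus X$.

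Given this local structure, I would exploit the heavy symmetry of $S(5,6,12)$: the Mathieu group $M_{12}$ acts 5-transitively on the ground set, and its stabilizer structure lets us reduce each of the cases $k \in \{2,3,4,5,6\}$ to a small number of orbit representatives for the pair $(A,B)$ (or really for the unordered multiset $A \cup B$). For each representative I would choose the partition $X_1 \sqcup X_2$ and bijection $\varphi$ by ruling out the constantly many "forbidden" blocks explicitly, using the fact that any small independent set is contained in many bases. For the self-dual extreme case $k=6$ (i.e.\ $A \cup B = E$), complements of bases are bases, so $A' = E \setminus B'$ is forced, and $\varphi$ is a pairing of $A'$ with its complement that must avoid all blocks simultaneously; here I would lean on the explicit hexad structure of the Steiner system to exhibit a working pairing.

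The hard part is precisely this large-$k$ regime, and especially $k=6$: one must produce a single bijection compatible with all $2^6$ subset exchanges, which a priori involves many block-avoidance conditions. I expect the cleanest proof either (i) invokes the automorphism group to pin down a canonical pair of disjoint complementary bases and then verifies a specific "Steiner-aligned" pairing by hand, or (ii) upgrades the argument to show $S(5,6,12)$ is in fact strongly base orderable by checking that the natural bijection induced by the action of a well-chosen involution in $M_{12}$ always works; either way, the core obstacle is organising the block-avoidance constraints, which the Steiner design axiom is precisely suited to handle.
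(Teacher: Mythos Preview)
Your proposal is an outline rather than a proof: you correctly identify the task and set up a case split on $k=|A\setminus B|$, but every case with $k\ge 2$ is left at the level of ``I would exploit symmetry'' and ``I expect the cleanest proof\ldots''. The whole difficulty of the lemma is producing, for each representative, an explicit repartition $A',B'$ and bijection $\varphi$ and then verifying all $2^k$ subset exchanges avoid blocks; you never actually do this, and there is no a priori reason the block-avoidance constraints are simultaneously satisfiable just because ``any small independent set is contained in many bases''. Your option (ii), that $S(5,6,12)$ might even be strongly base orderable, is a gamble you would have to settle before relying on it.

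By contrast, the paper's proof is organised around the minor $N=M/(A\cap B)\backslash\bigl(E\setminus(A\cup B)\bigr)$, which by 5-transitivity is \emph{unique up to isomorphism} for each value of $|A\cap B|$; this is a sharper reduction than your ``small number of orbit representatives for $(A,B)$''. The paper then works with explicit $GF(3)$ matrix representations of these minors ($P$ for $|A\cap B|=2$, $T'$ for $|A\cap B|=1$) and exhibits a concrete bijection in coordinates. Crucially, the cases are nested: the $|A\cap B|=1$ case is handled by contracting/deleting one pair to land back in the $|A\cap B|=2$ matrix, and the disjoint case $|A\cap B|=0$ is handled by extending the $|A\cap B|=1$ bijection with a single new pair and invoking identical self-duality to cover the ``other half'' of the subsets. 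This inductive linkage is what makes the verification finite and short, and it is the idea your plan is missing.
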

\begin{proof} 
As a preparation for the proof, we discuss representations of some minors of $S(5,6,12)$ over the field $GF(3)$. We denote the columns of an $r \times n$ matrix $A$ by $a_1, \dots, a_n$, and the matroid represented by $A$ over the field $GF(3)$ by $M[A]$. All the computations of the proof are meant over the field $GF(3)$.

Oxley~\cite{oxley1987characterization} (see also \cite[page 658]{oxley2011matroid}) gives the following $GF(3)$-representation of $S(5,6,12)$:
\begin{equation*}
S \coloneqq \left[\begin{array}{*{6}r|*{6}r}
    1 & ~~0 & ~~0 & ~~0 & ~~0 & ~~0 &    0 &  1 & 1  &  1 &  1 & 1 \\
    0 & 1 & 0 & 0 & 0 & 0 &    1 &  0 & 1  & -1 & -1 & 1 \\
    0 & 0 & 1 & 0 & 0 & 0 &    1 &  1 & 0  &  1 & -1 & -1 \\
    0 & 0 & 0 & 1 & 0 & 0 &    1 & -1 & 1  &  0 &  1 & -1 \\
    0 & 0 & 0 & 0 & 1 & 0 &    1 & -1 & -1 &  1 &  0 & 1 \\
    0 & 0 & 0 & 0 & 0 & 1 &    1 &  1 & -1 & -1 &  1 & 0
\end{array}\right].
\end{equation*}
A matrix $T$ representing the matroid $M[S] / s_6 \backslash s_{12}$ 
can be obtained by deleting the 6th and 12th columns and the 6th row of $S$ (see \cite[Proposition 3.2.6]{oxley2011matroid}):
\begin{equation*}
T \coloneqq \left[
\begin{array}{*{5}r|*{5}r}
1 & ~~0 & ~~0 & ~~0 & ~~0   & 0 &  1 &  1 &  1 &  1 \\
0 & 1 & 0 & 0 & 0   & 1 &  0 &  1 & -1 & -1 \\
0 & 0 & 1 & 0 & 0   & 1 &  1 &  0 &  1 & -1 \\
0 & 0 & 0 & 1 & 0   & 1 & -1 &  1 &  0 &  1 \\
0 & 0 & 0 & 0 & 1   & 1 & -1 & -1 &  1 &  0  
\end{array}
\right].
\end{equation*}
However, it will be more convenient to work with the following matrix:
\begin{equation*}
T' \coloneqq \left[
    \begin{array}{*{5}r|*{5}r}
        1 & ~~0 & ~~0 & ~~0 & ~~0 &   1 &  1 &  1 &  1 & ~~1 \\
        0 & 1 & 0 & 0 & 0 &   0 & -1 & -1 &  1 & 1 \\
        0 & 0 & 1 & 0 & 0 &  -1 &  0 &  1 & -1 & 1 \\
        0 & 0 & 0 & 1 & 0 &   1 & -1 &  0 & -1 & 1 \\
        0 & 0 & 0 & 0 & 1 &  -1 &  1 & -1 &  0 & 1 \\
    \end{array}
\right].
\end{equation*}
The matroids $M[T]$ and $M[T']$ are isomorphic. Indeed, multiplying $T$ by any $5\times 5$ invertible matrix from the left does not change $M[T]$. In particular, the matroid represented by 
\[\left[\begin{array}{*{5}r} t'_4 & t'_2 & t'_1 & t'_3 & t'_6 \end{array}\right] \cdot T = \left[\begin{array}{*{10}r} t'_4 & t'_2 & t'_1 & t'_3 & t'_6 & -t'_7 & t'_5 & -t'_8 & -t'_{10} & -t'_9 \end{array}\right] \] 
is $M[T]$, while it is isomorphic to $M[T']$. Finally, it is not difficult to check that $M[T']/t'_5 \backslash t'_{10}$ is represented by 
\begin{equation*}
P \coloneqq \left[
    \begin{array}{*{4}r |*{4}r}
    1 & ~~0 & ~~0 & ~~0 &   1 &  1 &  1 & 1 \\
    0 & 1 & 0 & 0 &   0 & -1 & -1 & 1 \\
    0 & 0 & 1 & 0 &  -1 &  0 &  1 & -1 \\
    0 & 0 & 0 & 1 &   1 & -1 &  0 & -1
    \end{array}
\right].
\end{equation*}
Besides these matrix representations, we rely on several properties of the matroid $S(5,6,12)$ from \cite[page 658]{oxley2011matroid}. 

We turn to the proof of the lemma, that is, we show that \ref{prop:sbro} holds for $S(5,6,12)$. This property is equivalent to the following: for each pair $A, B$ of bases of $M[S]$ there exists a pair $A', B'$ of disjoint bases of the matroid $N\coloneqq M/(A\cap B)\backslash (E\setminus (A\cap B))$, and a bijection $\varphi\colon A' \to B'$ such that $(A' \setminus X) \cup \varphi(X)$ is a basis of $N$ for every $X \subseteq A$. We distinguish several cases based on the size of $A \cap B$.

If $|A \cap B| \ge 3$, then $N$ has rank at most three. In this case $N$ is strongly base orderable, as $S(5,6,12)$ contains $M(K_4)$ as a minor, and $M(K_4)$-minor-free matroids of rank at most three are strongly base orderable.

If $|A \cap B| = 2$, then we use the fact that the matroid obtained from $S(5,6,12)$ by deleting two elements and contracting two elements is unique up to isomorphism\footnote{Note that the matroid in question is the matroid denoted by $P_8$ in \cite{oxley2011matroid}.}, which follows from the automorphism group of $S(5,6,12)$ being 5-transitive.  Therefore, $N$ is isomorphic to $M[P]$. Setting $A_1 \coloneqq \{p_1, p_2, p_3, p_4\}$, $B_1 \coloneqq \{p_5, p_6, p_7, p_8\}$ and $\varphi_1\colon A_1 \to B_1$, $\varphi_1(p_i) = p_{i+4}$ for $i=1,\dots,4$, it is not difficult to check that $(A_1 \setminus X) \cup \varphi_1(X)$ is a basis of $M[P]$ for every $X \subseteq A_1$.

If $|A \cap B| = 1$, then $N$ is isomorphic to $M[T']$ by a similar reasoning as in the previous case. Let $A_2 \coloneqq \{t'_1, \dots, t'_5\}$, $B_2 \coloneqq \{t'_6, \dots, t'_{10}\}$ and $\varphi_2 \colon A_2 \to B_2, \varphi_2(t'_i) = t'_{i+5}$ for $i=1,\dots, 5$. We claim that $(A_2 \setminus X) \cup \varphi_2(X)$ is a basis of $M[T']$ for every $X \subseteq A_2$. If $t'_5 \not \in X$, then this follows from $((A_2-t'_5) \setminus Z) \cup \varphi_2(Z)$ being a basis of $M[T'] / t'_5 \backslash t'_{10}$ for every $Z \subseteq A_2-t'_5$, which we have already seen in the previous case. If $t'_5 \in X$, then we need to show that $((A_2-t'_5) \setminus Z) \cup \varphi_2(Z)$ is a basis of $M[T'] / t'_{10} \backslash t'_5$ for every $Z \subseteq A'-t'_5$. We obtain the following matrix $P'$ representing $M[T'] / t'_{10} \backslash t'_5$ from $T'$ by pivoting on the 10th element of the 5th row, deleting the 5th and 10th columns and deleting the 5th row:
\begin{equation*}
P' \coloneqq \left[
\begin{array}{*{4}r|*{4}r}
 1 & ~~0 & ~~0 & ~~0 &   -1 &  0 & -1 &  1 \\
 0 & 1 & 0 & 0 &    1 &  1 &  0 &  1 \\
 0 & 0 & 1 & 0 &    0 & -1 & -1 & -1 \\
 0 & 0 & 0 & 1 &   -1 &  1 &  1 & -1 
\end{array}
\right].
\end{equation*}
We need to show that for $A_3 \coloneqq \{p'_1, p'_2, p'_3, p'_4\}$, $B_3 \coloneqq \{p'_5, p'_6, p'_7, p'_8\}$ and $\varphi_3 \colon A_3 \to B_3$, $\varphi_3(p'_i) = \varphi_3(p'_{i+4})$ for $i=1,\dots,4$, the set $(A_3 \setminus Z) \cup \varphi_3(Z)$ is a basis of $M[P']$ for every $Z \subseteq A_3$. As
\begin{equation*}
\left[\begin{array}{*{4}r} p_5 & p_6 & p_3 & -p_8 \end{array}\right] \cdot P' =
\left[\begin{array}{*{8}r} p_5 & p_6 & p_3 & -p_8 &   p_1 &  p_2 & p_7 & -p_4\end{array}\right]
\end{equation*}
represents the same matroid as $P'$, the function $f \colon  (p'_1,\dots, p'_8) \mapsto (p_5,p_6,p_3,p_8,p_1,p_2,p_7,p_4)$ is an isomorphism from $M[P']$ to $M[P]$. Since $f(\{p'_i,p'_{i+4}\}) = \{p_i,p_{i+4}\}$ for $i=1,\dots,4$, $f$ maps the family $\{(A_3 \setminus Z) \cup \varphi_3(Z) \mid Z \subseteq A_3\}$ to $\{(A_1 \setminus X) \cup \varphi_1(X) \mid X \subseteq A_1\}$. As we have already seen that each member of the latter family is a basis of $M[P]$, it follows that each set of the form $(A_3 \setminus Z) \cup \varphi_3(Z)$ is a basis of $M[P']$. 

Finally, consider the case $A \cap B = \emptyset$. As the matroids $M[S] / s_6 \backslash s_{12} = M[T]$ and $M[T']$ are isomorphic, there exists a pair $A_4, B_4$ and a bijection $\varphi_4 \colon A_4 \to B_4$ such that $(A_4 \setminus X) \cup \varphi_4(X)$ is a basis of $M[S]/s_6\backslash s_{12}$ for every $X \subseteq A_4$. Extend $\varphi_4$ to a bijection $\hat \varphi_4 \colon A_4 + s_6 \to B_4 + s_{12}$ by letting $\hat \varphi_4(s_6) = s_{12}$. We claim that $((A_4+s_6)\setminus X) \cup \hat\varphi_4(X)$ is a basis of $M[S]$ for every $X \subseteq A_4+s_6$. If $s_6 \not \in X$, this follows from $(A_4  \setminus X) \cup \varphi_4(X)$ being a basis of $M[S] / s_6 \backslash s_{12}$. If $s_6 \in X$, then we use that $(X-s_6) \cup \varphi_4((A_4+s_6) \setminus X)$ is a basis of $M[S]/s_6 \backslash s_{12}$, hence $X \cup \varphi_4((A_4 + s_6)\setminus X)$ is a basis of $M[S]$. This implies its complement $((A_4+s_6)\setminus X) \cup \varphi_4(X)$ being a basis as well, since $M[S]$ is known to be identically self-dual. 
\end{proof}

\begin{thm}\label{thm:binter}
The matroids $M(K_4)$ and $J$ are excluded minors for $\sbro$. Furthermore, 
\begin{enumerate}[label=(\alph*)]\itemsep0em
    \item a binary matroid is SBRO if and only if it does not contain $M(K_4)$ as a minor, and
    \item a ternary matroid is SBRO if and only if it does not contain $M(K_4)$ or $J$ as a minor.
\end{enumerate}
\end{thm}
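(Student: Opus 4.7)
The plan is to prove the excluded-minor claim and parts (a), (b) together, relying on minor-closedness of $\sbro$ (Theorem~\ref{thm:complete}) together with the structural theorems of Brylawski and Oxley for $M(K_4)$-minor-free binary and ternary matroids. First I verify $M(K_4)\notin\sbro$: since $K_4$ decomposes into two edge-disjoint spanning trees $A=\{12,23,34\}$ and $B=\{13,14,24\}$, and the $S_4$-action on $V(K_4)$ is transitive on such decompositions (the stabilizer of one has order $4$), it suffices to treat this single decomposition. Any repartition $A',B'$ of $A\cup B$ into disjoint bases is again a pair of disjoint spanning trees; I enumerate the bijections $\varphi\colon A'\to B'$ satisfying the singleton condition $A'-x+\varphi(x)\in\cB$ for every $x\in A'$ (at most three candidates per decomposition), and check directly that in each case some two-element subset $X\subseteq A'$ makes $(A'\setminus X)\cup\varphi(X)$ a triangle of $K_4$, hence not a basis. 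Then $J\notin\sbro$ follows by combining Remark~\ref{rem:antidm}---which supplies a strongly base orderable partition matroid $M_2$ with $\beta(J)=\beta(M_2)=2$ but $\beta(J\cap M_2)=3$---with Theorem~\ref{thm:sbro}: were $J$ SBRO, the latter would force $\beta(J\cap M_2)=2$, a contradiction.

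For the ``if'' direction of (a), I invoke Brylawski's characterization of binary $M(K_4)$-minor-free matroids as those obtainable from single-element matroids by direct sums, parallel extensions (principal extensions into rank-$1$ flats), and series extensions (duals of parallel extensions). Since $\sbro$ is a complete class (Theorem~\ref{thm:complete}), it is closed under all three operations, and single-element matroids are trivially SBRO, so every binary $M(K_4)$-minor-free matroid is SBRO. The ``only if'' direction is immediate from $M(K_4)\notin\sbro$ and minor-closedness.

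For the ``if'' direction of (b), I use Oxley's classification~\cite{oxley1987characterization} of 3-connected ternary $M(K_4)$-minor-free matroids, namely the uniform matroids $U_{2,n}$ and $U_{n-2,n}$, the ternary Steiner matroid $S(5,6,12)$, and $J$ itself. Apart from $J$, each of these is SBRO: uniform matroids are SBO, and $S(5,6,12)$ is SBRO by Lemma~\ref{lem:issbro}. A general ternary $\{M(K_4),J\}$-minor-free matroid decomposes via direct sums and 2-sums into 3-connected pieces, each appearing on Oxley's list with $J$ excluded; closure of $\sbro$ under direct sums and 2-sums (both being complete-class operations) finishes the argument. The ``only if'' direction follows from $M(K_4),J\notin\sbro$ and minor-closedness.

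The excluded-minor claim then follows from (a) and (b). Every proper minor of $M(K_4)$ is a binary $M(K_4)$-minor-free matroid and hence SBRO by (a); every proper minor of $J$ is a ternary $\{M(K_4),J\}$-minor-free matroid---since $J$ itself is $M(K_4)$-minor-free by Oxley, and proper minors of $J$ cannot contain $J$---and hence SBRO by (b). The main obstacle is the ``if'' direction of (b): invoking Oxley's classification and verifying that the resulting decomposition into 3-connected pieces stays inside $\sbro$ requires the nontrivial Lemma~\ref{lem:issbro} for $S(5,6,12)$ together with careful use of the complete-class machinery to handle 2-sums.
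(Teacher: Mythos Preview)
Your overall strategy matches the paper's: rule out $M(K_4)$ and $J$, then invoke Brylawski for the binary case and Oxley's classification for the ternary case, with Lemma~\ref{lem:issbro} handling $S(5,6,12)$. The paper's treatment of $M(K_4)\notin\sbro$ is slicker than your direct enumeration---it simply applies Theorem~\ref{thm:sbro} to $M(K_4)$ together with the partition matroid on the three perfect matchings of $K_4$, exactly as you do for $J$---but your case-check approach is also valid once carried out.

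There is, however, a genuine error in your statement of Oxley's classification. The 3-connected ternary $M(K_4)$-minor-free matroids are \emph{not} the uniform matroids $U_{2,n}$ and $U_{n-2,n}$ together with $S(5,6,12)$ and $J$. Oxley's theorem says that every such matroid is a whirl $\mathcal{W}^r$ for some $r\ge 2$, or $J$, or a minor of $S(5,6,12)$. The whirls $\mathcal{W}^r$ for $r\ge 3$ are not uniform---they have rank $r$ on $2r$ elements with nontrivial circuit structure---so your list omits an infinite family of 3-connected pieces that can occur in the 2-sum decomposition. (Note also that $U_{2,n}$ is only ternary for $n\le 4$, so the family you name is in any case very small.) The repair is easy, since whirls are transversal matroids and hence strongly base orderable, but as written your argument for part~(b) has a gap precisely where you locate ``the main obstacle'' elsewhere.
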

\begin{proof}
First we show that neither $M(K_4)$ nor $J$ is SBRO. This follows from Theorem~\ref{thm:sbro} as each $M_1 \in \{M(K_4), J\}$ is 2-coverable and there exists a 2-coverable partition matroid $M_2$ on the same ground set such that $\beta(M_1\cap M_2)=3$. Indeed, for $M_1 = M(K_4)$, let the three partition classes defining $M_2$ be the matchings of size two of $K_4$, see \cite{schrijver2003combinatorial}. For $M_1 = J$, the statement follows by Remark~\ref{rem:antidm}.

$M(K_4)$-minor-free binary matroids are SBRO, as they coincide with the graphic matroids of series-parallel graphs~\cite{brylawski1971combinatorial} which are strongly base orderable, see also \cite{welsh1976matroid}. As $M(K_4)$ is not SBRO, it follows that $M(K_4)$ is the unique binary excluded minor for $\sbro$.

Finally, we show that if $M$ is a ternary excluded minor of $\sbro$ distinct from $M(K_4)$, then $M$ is isomorphic to $J$. As $\sbro$ is closed under direct sums and 2-sums by Theorem~\ref{thm:complete}, it follows that $M$ is 3-connected. Oxley~\cite{oxley1987characterization} showed that a 3-connected $M(K_4)$-minor-free ternary matroid is isomorphic either to the rank-$r$ whirl $\mathcal{W}^r$ for some $r\ge 2$, to $J$, or to a minor of the matroid $S(5,6,12)$. Since $\mathcal{W}^r$ is a transversal matroid, it is strongly base orderable. By Lemma~\ref{lem:issbro}, $S(5,6,12)$ is SBRO. Therefore, $M$ is isomorphic to $J$. As $J$ is not SBRO and does not contain $M(K_4)$ as a minor, it follows that $J$ is the unique ternary excluded minor for $\sbro$ apart from $M(K_4)$.
\end{proof}

As $M(K_4)$ is the only binary, and $M(K_4)$ and $J$ are the only ternary excluded minors for base orderability \cite{oxley1987characterization}, Theorem~\ref{thm:binter} immediately implies the following.

\begin{cor} \label{cor:ternary} 
A binary or ternary matroid is SBRO if and only if it is base orderable.
\end{cor}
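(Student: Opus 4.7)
The plan is to deduce the corollary directly from Theorem~\ref{thm:binter} together with the classical characterization of base orderable binary/ternary matroids via excluded minors. The main observation is that in both the binary and the ternary setting, the excluded minors for $\sbro$ furnished by Theorem~\ref{thm:binter} coincide with the excluded minors for base orderability known from Oxley~\cite{oxley1987characterization}.

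Concretely, I would proceed in two short steps. First, for the easy direction, note that \ref{prop:sbo} implies both \ref{prop:bo} and \ref{prop:sbro} by taking $A'=A$ and $B'=B$; but more importantly, base orderability is a minor-closed property whose unique binary excluded minor is $M(K_4)$, and whose ternary excluded minors are $M(K_4)$ and $J$ (by \cite{oxley1987characterization}). Thus a binary base orderable matroid contains no $M(K_4)$ minor, and a ternary base orderable matroid contains neither $M(K_4)$ nor $J$ as a minor; in both cases Theorem~\ref{thm:binter} yields that the matroid is SBRO.

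For the converse, suppose $M$ is a binary or ternary SBRO matroid. Applying Theorem~\ref{thm:binter} in the appropriate case, $M$ contains no $M(K_4)$ minor (binary case) or neither $M(K_4)$ nor $J$ as a minor (ternary case). By Oxley's excluded minor characterization of base orderability among binary and ternary matroids, this forces $M$ to be base orderable, completing the equivalence. There is essentially no obstacle here; the only nontrivial input is the matching of excluded-minor lists, which is precisely what Theorem~\ref{thm:binter} was designed to achieve.
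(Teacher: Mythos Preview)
Your proposal is correct and follows essentially the same approach as the paper: both derive the corollary by observing that the excluded-minor lists for $\sbro$ in Theorem~\ref{thm:binter} coincide with the known excluded minors for base orderability among binary and ternary matroids from~\cite{oxley1987characterization}. The paper states this in one line, whereas you spell out both directions, but the content is identical.
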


Consider a matroid $M=(E,\cC)$ on ground set $E$ with set of circuits $\cC$. Furthermore, assume that $G=(E,F)$ is a graph with vertex set $E$ and edge set $F$. For a subset $X\subseteq E$, let $\cC[X]$ denote the set of circuits of $M$ that lie in $X$, that is, $\cC[X]\coloneqq\{C\in\cC\mid C\subseteq X\}$, and let $F[X]$ denote the set of edges of $G$ induced by $X$. We say that $G$ \textbf{covers} a subset $\cC'\subseteq \cC$ of circuits if $F[C]\neq\emptyset$ for every $C\in\cC'$. In other words, every stable subset of $E$ in $G$ is such that it contains no circuit from $\cC'$.

Let us now return to the open problem of Davies and McDiarmid on replacing strongly base orderability with base orderability in Theorem~\ref{thm:dm}. Theorem~\ref{thm:sbro} and Corollary~\ref{cor:ternary} together imply an affirmative answer in the special case when the matroids are ternary -- for binary matroids, this was already known as the classes of base orderable binary matroids and strongly base orderable binary matroids coincide. Motivated by this observation, it is natural to ask whether all base orderable matroids are SBRO. Unfortunately, the next example shows that this is not the case.

\begin{figure} \centering
\includegraphics[width=0.25\textwidth]{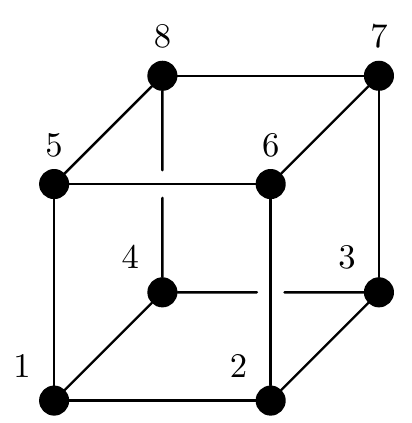}
\caption{The rank-4 matroid $AG(3,2)$ in which each set of size three is independent, and the dependent sets of size four are the six \emph{faces} of the cube, the six \emph{diagonal planes} and the two \emph{twisted planes} $\{1,3,6,8\}$ and $\{2,4,5,7\}$.}
\label{fig:AG32}
\end{figure}

\begin{rem} \label{rem:xten}
We construct a rank-5 matroid $X_{10}$ on 10 elements 
that is base orderable but not SBRO. Recall the construction of the the binary affine cube $AG(3,2)$ from \cite[page 645]{oxley2011matroid} using faces, diagonal planes and twisted planes, see Figure~\ref{fig:AG32}. We define $X_{10}$ on the ground set $\{1,2,\dots, 8\} \cup \{a,b\}$ such that each set of size four is independent, and the family of dependent sets of size five is 
\begin{equation*}
\mathcal{H} \coloneqq \left\{F\cup \{a\}\mid  \text{$F$ is a face}\right\} \cup \left\{F\cup \{b\}\mid \text{$F$ is a diagonal plane or a twisted plane}\right\}.
\end{equation*}
By the construction of paving matroids, see e.g.~\cite[Theorem~5.3.5]{frank2011connections}, $X_{10}$ is a paving matroid of rank $5$ as $|H_1 \cap H_2| \le 3$ for each $H_1, H_2 \in \mathcal{H}$, $H_1 \ne H_2$. It is not difficult to check that $X_{10}$ does not contain $M(K_4)$ as a minor, hence it is base orderable since $M(K_4)$-minor-free paving matroids are base orderable~\cite{bonin2016infinite}. 

It remains to show that $X_{10}$ is not SBRO. Assume for contradiction that $X_{10}$ is SBRO and consider a pair $A, B$ of disjoint bases, e.g.\ $A = \{1,2,3,5, a\}$ and $B = \{4,6,7,8,b\}$. By \ref{prop:sbro}, there exists a pair $A', B'$ of disjoint bases and a bijection $\varphi\colon A' \to B'$ such that $(A' \setminus X) \cup \varphi(X)$ is a basis for $X \subseteq A'$, or equivalently, $P \coloneqq \{e\varphi(e) \mid e \in A'\}$ covers each circuit of $X_{10}$. Then we define a bijection $\phiext\colon A'\cup B'\to A'\cup B'$ by setting $\phiext(z)\coloneqq \varphi(z)$ if $z\in A'$ and $\varphi^{-1}(z)$ otherwise.

Since $P$ covers both $\{1,3,6,8,b\}$ and $\{2,4,5,7,b\}$, at least one of the twisted planes $\{1,3,6,8\}$ and $\{2,4,5,7\}$ contains an edge of $P$. We may assume by symmetry that $13 \in P$. Consider the case first when $\phiext(5) \in \{2,4,6,7,8\}$.  Observe that $P' \coloneqq P \setminus \{13,5\phiext(5)\}$ covers each member of the family \[\cH' \coloneqq \{H\setminus \{1,3,5,\phiext(5)\} \mid H' \in \cH, |H' \cap \{1,3\}| = |H'\cap \{5, \phiext(5)\}| = 1\}.\]
We claim that $\cH'$ is the family of circuits of a matroid isomorphic to $M(K_4)$. Indeed,
\[\cH' = \begin{cases} \{\{4,8,a\}, \{4,6,b\}, \{6,7,a\}, \{7,8,b\}\} & \text{if $\phiext(5)=2$,} \\ \{\{2,7,a\},\{2,8,b\}, \{4,7,b\}, \{4,8,a\}\} & \text{if $\phiext(5) = 6$,} \\ \{\{2,6,a\}, \{2,8,b\}, \{4,6,b\}, \{4,8,a\}\} & \text{if $\phiext(5) = 7$,}\end{cases}\]
and the cases $\phiext(5) = 4$ and $\phiext(5) = 8$ are symmetric to the cases $\phiext(5) = 2$ and $\phiext(5) = 6$, respectively. This is a contradiction as the circuits of $M(K_4)$ cannot be covered by a matching.

We proved that $\phiext(5) \in \{a, b\}$ and thus by symmetry $\phiext(7) \in \{a, b\}$ holds as well. We may assume that $\phiext(5) = a$ and $\phiext(7) = b$. Since $P$ covers each of $\{3,4,7,8,a\}$, $\{2,3,6,7,a\}$, $\{3,4,5,6,b\}$ and $\{2,3,5,8,b\}$, the two edges of $P \setminus \{13, 5a, 7b\}$ cover each of $\{4,8\}$, $\{2,6\}$, $\{4,6\}$ and $\{2,8\}$ which is not possible, leading to a contradiction.
\end{rem}

Figure~\ref{fig:classes} summarizes the relation of the matroid classes discussed in the current section. Here $M_\alpha$ denotes the matroid described in \cite[Section 8]{bonin2016infinite} with parameters $r=5$, $|A|=|D|=1$, and $|B|=|C|=|E|=|F| = 2$. That is, let $A=\{a\}$, $B=\{b_1,b_2\}$, $C=\{c_1,c_2\}$, $D=\{d\}$, $E=\{e_1,e_2\}$, and $F=\{f_1,f_2\}$. Then the proper nonempty cyclic flats of $M_\alpha$ are $C\cup B\cup E$, $C\cup A\cup D$, $F\cup E\cup A$ and $F\cup D\cup B$ with ranks $r(C\cup B\cup E)=|C|+|B|=4$, $r(C\cup A\cup D)=|C|+|A|=3$, $r(F\cup E\cup A)=|F|+|E|=4$, and $r(F\cup D\cup B)=|F|+|D|=3$. Let $X$ and $Y$ be arbitrary bases of $M_\alpha$. It was shown in \cite{bonin2016infinite} that $M_\alpha$ is a forbidden minor for strongly base orderability, hence if $X\cap Y\neq\emptyset$ then even \ref{prop:sbo} holds for the pair $X,Y$. On the other hand, if $X$ and $Y$ are disjoint, then it is not difficult to check that the bijection $\varphi\colon (a,b_1,c_1,e_1,f_1) \mapsto (d,b_2,c_2,e_2,f_2)$ satisfies \ref{prop:sbro}.

\begin{figure}[t]
    \centering
    \includegraphics[width=0.5\textwidth]{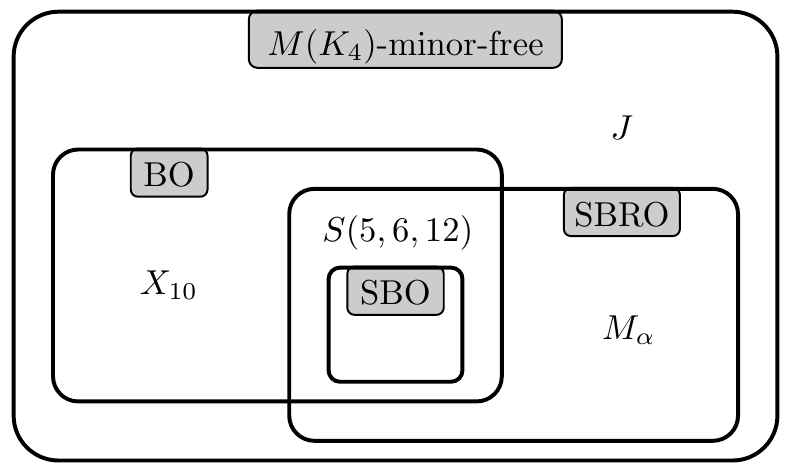}
    \caption{The inclusion relationship between the matroid classes discussed in Section~\ref{sec:sbro}, with examples showing strict containment.}
    \label{fig:classes}
\end{figure}

\section{Relaxing the bijection: covering the circuits by a 2-factor}
\label{sec:path}

In this section, we propose another relaxation of strongly base orderability. In order to do so, first we give a new interpretation of property \ref{prop:sbo}. Recall that for a matroid $M=(E,\cC)$, a graph $G=(E,F)$ is said to cover $\cC'\subseteq \cC$ if $F[C]\neq\emptyset$ for every $C\in\cC'$. Using this terminology, \ref{prop:sbo} is equivalent to saying that for any pair $A,B$ of bases of a strongly base orderable matroid $M=(E,\cC)$, there exists a graph $G$ consisting of a matching between the elements of $A\setminus B$ and $B\setminus A$ that covers $\cC[A\cup B]$. Similarly, property \ref{prop:sbro} discussed in Section~\ref{sec:sbro} translates into the existence of a matching on $A\triangle B$ that covers $\cC[A\cup B]$. Observe the small but crucial difference between the two definitions: while the former asks for matching edges going between the elements of $A$ and $B$, the latter allows the end vertices of any matching edge to fall in the same set, i.e.\ $A$ or $B$.

We conjecture that a similar statement holds for arbitrary matroids where $G$ is a $2$-regular graph or a path instead of a matching, where a graph is \textbf{$2$-regular} if each vertex has degree exactly two. More precisely, we propose four relaxations of different strengths, and say that a matroid $M=(E,\cC)$ has property
\begin{symenum}\itemsep0em
    \itemsymbol{R}\label{prop:r} if for any pair $A,B$ of bases, there exists a $2$-regular graph on $A\triangle B$ that covers $\cC[A\cup B]$,
    \itemsymbol{R+}\label{prop:rp} if for any pair $A,B$ of bases, there exists a $2$-regular graph that consists of cycles alternating between $A\setminus B$ and $B\setminus A$ and covers $\cC[A\cup B]$ ,
    \itemsymbol{P}\label{prop:p} if for any pair $A,B$ of bases, there exists a path on $A\triangle B$ that covers $\cC[A\cup B]$,
    \itemsymbol{P+}\label{prop:pp} if for any pair $A,B$ of bases, there exists a path that alternates between $A\setminus B$ and $B\setminus A$ and covers $\cC[A\cup B]$.
\end{symenum}
In all cases, the condition that the graph in question covers $\cC[A\cup B]$ is equivalent to requiring that the union of $A\cap B$ and any stable set of $A\triangle B$ in the graph is independent in $M$.

Since any path can be extended to a single cycle by adding an edge between its end vertices, property \ref{prop:pp} implies all the others, while property \ref{prop:r} is a special case of any of them. As for properties \ref{prop:rp} and \ref{prop:p}, we could not show any connection between them. As a matching between $A\setminus B$ and $B\setminus A$ can always be extended to an alternating path between them, strongly base orderable matroids satisfy \ref{prop:pp}, and an analogous reasoning shows that SBRO matroids satisfy \ref{prop:p}. 

\subsection{Covering fundamental circuits}
\label{sec:bro}

Observe that in all of properties~\ref{prop:r} -- \ref{prop:pp}, the number of edges used to cover the circuits in question is bounded by $|A\triangle B|$. At this point, it is not even clear why those circuits could be covered by a small number of graph edges. As a first step towards understanding the general case, instead of covering every circuit, we concentrate on covering fundamental circuits only. Given a basis $B$ of a matroid and an element $a$ outside of $B$, we denote by $C_B(a)$ the \textbf{fundamental circuit} of $a$ with respect to $B$, that is, the unique circuit in $B+a$. We extend this notation to sets as well, that is, for a set $X$ disjoint from $B$ we use $\cC_B(X)\coloneqq\{C_B(x)\mid x\in X\}$.

\begin{thm} \label{thm:fundamenta}
Let $A,B$ be bases of a matroid $M$. 
\begin{enumerate}[label=(\alph*)]\itemsep0em
    \item\label{it:funda} There exists a 2-regular graph that consists of cycles alternating between $A\setminus B$ and $B \setminus A$ and covers $\cC_A(B)\cup\cC_B(A)$.
    \item\label{it:fundb} There exists a tree that consists of edges between $A\setminus B$ and $B\setminus A$ and covers $\cC_A(B)\cup \cC_B(A)$.
\end{enumerate}
\end{thm}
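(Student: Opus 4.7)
My plan is to exploit two classical consequences of the basis-exchange axiom. First, for every $b\in B\setminus A$ the fundamental circuit $C_A(b)$ must meet $A\setminus B$: otherwise $C_A(b)\subseteq(A\cap B)+b\subseteq B$ would contradict the independence of $B$, and symmetrically $C_B(a)\cap(B\setminus A)\neq\emptyset$ for every $a\in A\setminus B$. Second, consider the bipartite graphs on vertex classes $A\setminus B$ and $B\setminus A$ defined by $G_1=\{ab:a\in C_A(b)\}$ and $G_2=\{ab:b\in C_B(a)\}$; equivalently, $ab\in G_1$ iff $A-a+b$ is a basis, and $ab\in G_2$ iff $B-b+a$ is a basis. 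Brualdi's theorem shows that each satisfies Hall's condition, since for any $S\subseteq B\setminus A$ the set $(A\cap B)\cup N_{G_1}(S)$ contains $C_A(b)-b$ for every $b\in S$ and hence spans $(A\cap B)\cup S$, forcing $|N_{G_1}(S)|\geq|S|$; therefore both $G_1$ and $G_2$ admit perfect matchings. The key point for both parts is that an edge $ab$ of the constructed graph covers $C_A(b)$ precisely when $ab\in G_1$, and covers $C_B(a)$ precisely when $ab\in G_2$.

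\textit{For part (a),} I would take perfect matchings $\pi_1\subseteq G_1$ and $\pi_2\subseteq G_2$ with $|\pi_1\cap\pi_2|$ minimum and claim that this intersection is empty. Indeed, if $e=a_0b_0\in\pi_1\cap\pi_2$, an alternating-path argument in $G_2$ should yield a new perfect matching $\pi_2'\subseteq G_2$ not containing $e$, contradicting minimality. Once $\pi_1\cap\pi_2=\emptyset$, the union $\pi_1\cup\pi_2$ is a 2-regular bipartite graph whose cycles alternate between the two partite sets; each $b$ is covered via its $\pi_1$-edge in $G_1$ and each $a$ via its $\pi_2$-edge in $G_2$. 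The main obstacle is justifying the alternating-path step when $e$ is ``forced'' into every perfect matching of $G_2$; in that event one would reroute $\pi_1$ inside $G_1$ instead, or perturb both matchings simultaneously using Brualdi's symmetric exchange, which guarantees at least one element of $G_1\cap G_2$ to work with.

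\textit{For part (b),} starting from the graph $H=\pi_1\cup\pi_2$ of part (a), I would convert each alternating cycle into a path by deleting one edge chosen so that both endpoints keep a neighbour of the required colour. Concretely, deleting an edge $e\in H$ whose cyclic successor lies in $G_1\cap G_2$ is safe, since the surviving neighbour of each endpoint of $e$ then supplies both a $G_1$- and a $G_2$-edge; hence whenever every cycle of $H$ contains an element of $G_1\cap G_2$, one obtains a spanning forest preserving the coverage, which can then be extended to a spanning tree by arbitrary cross-edges (these affect neither acyclicity nor coverage). The principal obstacle is ruling out cycles in which no edge lies in $G_1\cap G_2$; one may either re-choose the initial matchings to avoid such ``bad'' cycles (for instance, by maximizing the number of doubly-covering edges $|\pi_1\cap G_2|+|\pi_2\cap G_1|$) or bypass part (a) altogether with a direct construction: start from a single perfect matching $\pi_1\subseteq G_1$ and, for every $a\in A\setminus B$ with $\pi_1(a)\notin C_B(a)$, add a fresh edge $(a,b)$ with $b\in C_B(a)\cap(B\setminus A)$ lying in a different connected component of the current graph, then close up with arbitrary cross-edges. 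The substantive step, in either approach, is showing that the required edge always exists -- this is likely to follow from a rank inequality exploiting the matroid structure of $C_B(a)$.
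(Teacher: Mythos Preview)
For part~(a), your core idea coincides with the paper's: take perfect matchings $\pi_1\subseteq G_1$ and $\pi_2\subseteq G_2$ (which exist by Brualdi's bijection theorem) and output $\pi_1\cup\pi_2$. However, your attempt to force $\pi_1\cap\pi_2=\emptyset$ is both unnecessary and impossible in general. It is unnecessary because the union of two perfect matchings, taken as a multigraph, is always $2$-regular with bipartite (hence alternating) cycles; shared edges simply become digons. It is impossible because if $C_B(a)\cap(B\setminus A)=\{b\}$ and $C_A(b)\cap(A\setminus B)=\{a\}$ --- which already happens for a rank-$1$ matroid, or whenever $a,b$ form a series pair in $M|(A\cup B)$ --- then every perfect matching of $G_1$ and of $G_2$ must use the edge $ab$, so no alternating-path rerouting can eliminate it. The paper simply takes the multigraph union and is done in one line.

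For part~(b) there is a genuine gap: you are missing the key tool, the \emph{symmetric} exchange property. For every $a\in A\setminus B$ there exists $\phi_A(a)\in B\setminus A$ with both $A-a+\phi_A(a)$ and $B+a-\phi_A(a)$ bases, so the edge $a\phi_A(a)$ lies in $G_1\cap G_2$; define $\phi_B$ symmetrically. In the graph $H$ with edge set $\{a\phi_A(a)\}\cup\{b\phi_B(b)\}$, \emph{every} edge lies in $G_1\cap G_2$; consequently any edge of $H$ incident to a given $a\in A\setminus B$ covers $C_B(a)$, and any edge incident to a given $b\in B\setminus A$ covers $C_A(b)$. Thus any spanning forest of $H$ (one exists since each vertex has degree at least one) still covers all fundamental circuits, and adding arbitrary bipartite edges to connect the components yields the desired tree. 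Your proposed routes --- deleting a carefully chosen edge from each cycle of $\pi_1\cup\pi_2$, or growing a forest from $\pi_1$ by adding $G_2$-edges across components --- both stall at exactly the obstacle you identify, namely the lack of control over which edges lie in $G_1\cap G_2$; symmetric exchange provides that control for free and makes the rank-inequality manoeuvre you allude to unnecessary.
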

\begin{proof}
The basis exchange axiom implies that there exists a bijection $\varphi_A\colon A\setminus B\to B\setminus A$ such that $A-a+\varphi_A(a)$ is a basis for each $a\in A\setminus B$, or equivalently, $a\in C_A(\varphi_A(a))$ (see e.g.\ \cite[Theorem 5.3.4]{frank2011connections}).  Similarly, there exists a bijection $\varphi_B\colon B\setminus A\to A\setminus B$ such that $B-b+\varphi_B(b)$ is a basis for each $b\in B\setminus A$, or equivalently, $b\in C_B(\varphi_B(b))$. Therefore the graph consisting of edges $\{a\varphi_A(a)\mid a\in A\setminus B\}\cup\{b\varphi_B(b)\mid b\in B\setminus A\}$ is a $2$-regular graph that covers $\cC_A(B)\cup\cC_B(A)$, proving \ref{it:funda}.

By the symmetric exchange axiom, there exists a mapping -- not necessarily a bijection -- $\phi_A\colon A\setminus B\to B\setminus A$ such that both $A-a+\phi_A(a)$ and $B+a-\phi_A(a)$ are bases for each $a\in A\setminus B$, or equivalently, the edge $a\phi_A(a)$ covers both $C_A(\phi_A(a))$ and $C_B(a)$. Similarly, there exists a mapping $\phi_B\colon B\setminus A\to A\setminus B$ such that both $B-b+\phi_B(b)$ and $A+b-\phi_B(b)$ are bases for each $b\in B\setminus A$, or equivalently, the edge $b\phi_B(b)$ covers both $C_B(\phi_B(b))$ and $C_A(b)$. Consider the graph consisting of edges $\{a\phi_A(a)\mid a\in A\setminus B\}\cup\{b\phi_B(b)\mid b\in B\setminus A\}$. Note that the graph may not be connected, but each vertex in $A\triangle B$ has degree at least one in it. Hence any maximum forest in this graph covers $\cC_A(B)\cup\cC_B(A)$. As extending the forest to a tree by adding edges connecting the components maintains this property, \ref{it:fundb} follows.
\end{proof}

Theorem~\ref{thm:fundamenta} shows that the fundamental circuits corresponding to the basis pair can be covered by a 2-regular graph. However, the same question remains open when the graph is required to be a path, and this seemingly simple task already appears to be highly non-trivial. The essence of part \ref{it:fundb} of the theorem is that, instead of a path, the covering can always be realized by a tree.

\subsection{Graphic matroids, paving matroids and spikes}
\label{sec:solved}

We could not identify any matroid for which \ref{prop:pp} would fail, hence we propose the following, probably overly optimistic conjecture.

\begin{conj}\label{conj:pp}
Let $A,B$ be bases of a matroid $M$. Then there exists a path that alternates between $A\setminus B$ and $B\setminus A$ and covers $\cC[A\cup B]$. 
\end{conj}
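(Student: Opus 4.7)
The plan is to build on Theorem~\ref{thm:fundamenta}\ref{it:funda}, which provides a 2-regular bipartite graph $G_0$ on $A \triangle B$ whose edges alternate between $A \setminus B$ and $B \setminus A$ and which already covers every fundamental circuit in $\cC_A(B) \cup \cC_B(A)$. Two gaps separate $G_0$ from a certificate for \ref{prop:pp}: the graph $G_0$ is a disjoint union of even alternating cycles rather than a single alternating path, and a cover of every fundamental circuit need not cover a non-fundamental circuit $C \subseteq A \cup B$ that meets both sides of $A \triangle B$ in many elements.

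For the first gap I would attempt a cycle-splicing argument. Given two alternating cycles $D_1, D_2$ of $G_0$ containing edges $ab \in D_1$ and $a'b' \in D_2$ with $a, a' \in A \setminus B$ and $b, b' \in B \setminus A$, deleting these edges and inserting $ab'$ and $a'b$ merges $D_1 \cup D_2$ into a single alternating cycle. The delicate point is that the new edges must also witness basis exchanges covering the fundamental circuits previously handled by $ab$ and $a'b'$, i.e.\ one needs $a' \in C_A(b)$ and $a \in C_A(b')$, and analogously on the side of $\varphi_B$. Repeated application of the symmetric exchange axiom yields candidate swaps, and one hopes to iterate until a single alternating Hamiltonian cycle of $A \triangle B$ is obtained, from which deleting any edge gives the desired path.

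The main obstacle is the second gap. For an arbitrary circuit $C \subseteq A \cup B$, strong circuit elimination allows $C$ to be reached from the fundamental circuits $\{C_A(b) : b \in C \cap (B \setminus A)\}$, but Theorem~\ref{thm:fundamenta}\ref{it:funda} gives no control over \emph{which} endpoint inside $C_A(b)$ is hit by the edge of $G_0$ covering it. The crux of a proof would be to choose $\varphi_A$ and $\varphi_B$ so that for every circuit $C \subseteq A \cup B$ at least one covering edge has both endpoints inside $C$. This amounts to a global Hall-type selection condition on the hypergraph of circuits contained in $A \cup B$, and establishing it in general appears to require new matroid-structural input beyond the exchange axioms that suffice for fundamental circuits.

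For the special classes addressed in Section~\ref{sec:solved} the above obstacles are bypassed by tailored constructions. For graphic matroids, circuits in $T_A \cup T_B$ are exactly the cycles of the multigraph $T_A \cup T_B$, so a depth-first traversal produces an alternating edge ordering that hits every cycle. For paving matroids every circuit has size $r(M)$ or $r(M)+1$, so the circuit hypergraph is so constrained that an explicit ordering succeeds. For spikes, the explicit description of the circuit family reduces the conjecture to a structural case check. These observations suggest that the general conjecture likely requires further structural insight beyond the fundamental-circuit cover of Theorem~\ref{thm:fundamenta}.
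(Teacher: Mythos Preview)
The statement you were asked to address is Conjecture~\ref{conj:pp}, and the paper does \emph{not} prove it: the authors explicitly describe it as ``probably overly optimistic'' and only verify property~\ref{prop:pp} for graphic matroids, paving matroids, and spikes (Theorems~\ref{thm:graphic}--\ref{thm:spike}). There is thus no ``paper's own proof'' to compare against in the general case.

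Your write-up is not a proof and, to your credit, does not pretend to be one. It is a diagnosis of why the conjecture is hard, and that diagnosis is essentially accurate. The two gaps you isolate are real: converting the alternating $2$-factor of Theorem~\ref{thm:fundamenta}\ref{it:funda} into a single alternating path while maintaining coverage is non-trivial, and---more seriously---a graph covering all fundamental circuits need not cover all circuits in $\cC[A\cup B]$. Your observation that the second gap requires, for every circuit $C\subseteq A\cup B$, a covering edge with \emph{both} endpoints in $C$ is exactly the crux, and you are right that nothing in the standard exchange axioms delivers this globally.

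One point worth sharpening: your cycle-splicing step for the first gap is underspecified. When you replace edges $ab$ and $a'b'$ by $ab'$ and $a'b$, the new edges must still cover the fundamental circuits that $ab$ and $a'b'$ were covering, and the conditions you list ($a'\in C_A(b)$, $a\in C_A(b')$, and the analogues for $\varphi_B$) are not guaranteed to be simultaneously satisfiable by any choice of cycles and edges. So even the reduction to a single alternating cycle covering the fundamental circuits is open as stated.

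Your closing remarks on the special classes are consonant with the paper's actual proofs, though the mechanisms differ in detail: the paper handles graphic matroids by induction on $|E|$ via a low-degree vertex (not a DFS traversal), and paving matroids by a one-element induction rather than a direct ordering argument. These are minor discrepancies; your high-level reading of why those classes are tractable is sound.
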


To validate the conjecture somewhat, we show that the statement holds for graphic matroids, paving matroids, and spikes. 
We start with a technical lemma. 

\begin{lem}\label{lem:disj}
Let $\cM$ be a minor-closed class of matroids and ${(X)\in\{\ref{prop:r},\ref{prop:rp},\ref{prop:p},\ref{prop:pp}\}}$. To verify that $(X)$ holds for each $M\in\cM$, it suffices to show that the property holds when the ground set is the disjoint union of $A$ and $B$.
\end{lem}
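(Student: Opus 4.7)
The plan is to pass to a suitable minor. Let $M\in\cM$ have bases $A$ and $B$, and set
\[N \coloneqq M/(A\cap B)\backslash(E\setminus(A\cup B)).\]
Since $\cM$ is minor-closed, $N\in\cM$. Its ground set is $A\triangle B$, and the sets $A'\coloneqq A\setminus B$ and $B'\coloneqq B\setminus A$ are disjoint bases of $N$ whose union is the whole ground set: contracting $A\cap B$ from $M$ leaves $A\setminus B$ and $B\setminus A$ as bases of $M/(A\cap B)$, and the subsequent deletion does not touch these elements. Thus $N$ is precisely a matroid of the kind for which we are assuming $(X)$, applied to the bases $A',B'$.

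The hypothesis therefore produces a graph $G$ of the required shape on $A'\triangle B' = A\triangle B$ that covers $\cC_N[A'\cup B'] = \cC_N$. The structural part of property $(X)$ --- being a 2-regular graph on $A\triangle B$, alternating between $A\setminus B$ and $B\setminus A$, being a path, etc.\ --- transfers verbatim from $N$ to $M$ once we view $G$ as a graph on the subset $A\triangle B$ of the ground set of $M$, since $A\setminus B$ and $B\setminus A$ retain their identities. Only the circuit-covering requirement for $M$ still needs verification.

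For this, let $C\in\cC_M[A\cup B]$. Since $A\cap B\subseteq A$ is independent in $M$, the set $C\setminus(A\cap B)$ is a non-empty subset of $A\triangle B$. By the standard description of circuits of a contraction, $C\setminus(A\cap B)$ contains some circuit $C'$ of $M/(A\cap B)$; since $C'\subseteq A\triangle B$ lies in the ground set of $N$, it is also a circuit of $N$. The covering property of $G$ applied to $C'$ yields an edge of $G$ contained in $C'\subseteq C$, so $G$ covers $C$ as required. No real obstacle arises; the only point to watch is the circuit behaviour under contraction and deletion, which is entirely standard, and the observation that every variant in $\{\ref{prop:r},\ref{prop:rp},\ref{prop:p},\ref{prop:pp}\}$ is phrased in terms of the single set $A\triangle B$, so that restricting attention to the minor loses no structure.
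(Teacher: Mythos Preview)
Your proof is correct and follows essentially the same approach as the paper's: pass to the minor $M/(A\cap B)\backslash(E\setminus(A\cup B))$, observe that $A\setminus B$ and $B\setminus A$ are disjoint bases whose union is the ground set, apply the hypothesis there, and then use that every circuit $C\in\cC[A\cup B]$ contains a circuit of the minor so the covering graph works for $M$ as well. Your write-up is slightly more explicit about why $C\setminus(A\cap B)$ contains a circuit of the contraction, but the argument is the same.
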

\begin{proof}
Let $A$ and $B$ be bases of the matroid $M=(E,\cC)$ where $M\in\cM$. If $A\cap B\neq\emptyset$ or $A\cup B\neq E$, then define $A'\coloneqq A\setminus B$, $B'\coloneqq B\setminus A$, $E'\coloneqq A'\cup B'$, and let $M'=(E',\cC')$ denote the matroid obtained from $M$ by deleting $E\setminus (A\cup B)$ and contracting $A\cap B$. Note that $A'$ and $B'$ are disjoint bases of $M'$ whose union is $E'$. Furthermore, for any circuit $C\in\cC[A\cup B]$, there exists a circuit $C'\in\cC'[A'\cup B']$ such that $C'\subseteq C$. Therefore, any graph proving $(X)$ for the pair $A',B'$ in $M'$ also proves $(X)$ for the pair $A,B$ in $M$.   
\end{proof}

With the help of Lemma~\ref{lem:disj}, we first verify the graphic case. It may be confusing that \ref{prop:pp} states the existence of a certain path and we are working with graphs, so let us emphasize that the path is defined on the elements of the ground set, i.e.\ the edges of the underlying graph, and it does not appear in the graph itself in any sense. 

\begin{thm} \label{thm:graphic} 
Graphic matroids have property \ref{prop:pp}.
\end{thm}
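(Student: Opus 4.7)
The plan is to apply Lemma~\ref{lem:disj} to reduce to the case where $A$ and $B$ are disjoint bases with $A\cup B=E$, so that $G$ carries two edge-disjoint spanning trees $A,B$ whose union is $E(G)$. I would then proceed by induction on $|V(G)|$, the base case $|V(G)|=2$ being a pair of parallel edges.

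For the inductive step, first consider the easy case where $G$ has a vertex $v$ of degree $2$. Its two incident edges must be an $A$-edge $a$ and a $B$-edge $b$, and by the inductive hypothesis applied to $G-v$ with spanning trees $A-a,B-b$ there is an alternating path $P'$ on $(A-a)\cup(B-b)$ covering all cycles of $G-v$. Appending $a,b$ to $P'$ in the order that preserves alternation gives a path $P$ on $A\cup B$; cycles of $G$ through $v$ are covered by the new consecutive pair $\{a,b\}$, while cycles not through $v$ lie in $G-v$ and are already covered by $P'$.

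The harder case is when every vertex of $G$ has degree at least $3$. Writing $n=|V(G)|$, we have $\sum_u d_G(u)=4(n-1)<4n$, hence some vertex $v$ has $d_G(v)=3$; after swapping $A$ and $B$ if necessary, we may take $d_A(v)=1$ and $d_B(v)=2$, with $A$-edge $a$ and $B$-edges $b_1,b_2$ at $v$, where $b_1$ is the unique $B$-edge at $v$ lying in the fundamental circuit $C_B(a)$. Applying the inductive hypothesis to $G'=G/a\setminus b_1$ with the disjoint spanning trees $A-a$ and $B-b_1$ (whose union equals $E(G')$) yields an alternating path $P'$ on $(A-a)\cup(B-b_1)$ covering all cycles of $G'$. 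The task is to insert $a$ and $b_1$ into $P'$ so that the new path $P$ also covers the cycles of $G$ not inherited from $G'$, namely the cycles through $v$ of types $\{a,b_1\}$ and $\{b_1,b_2\}$ (cycles of type $\{a,b_2\}$ through $v$ correspond bijectively to cycles of $G'$ through $b_2$ and are already covered by $P'$), together with cycles of $G$ passing through both endpoints of $a$ without using $a$.

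The key observation guiding the insertion is that the $(G-v)$-portion of any $\{b_1,b_2\}$-type cycle must contain an $A$-edge, because the endpoints $w_1$ and $w_2$ of $b_1$ and $b_2$ lie in different components of the forest $B\setminus\{b_1,b_2\}$; this $A$-edge already appears in $P'$. Inserting $b_1$ in $P$ adjacent to such an $A$-edge of $P'$, with $a$ placed next to $b_1$, produces consecutive pairs that simultaneously cover type-$\{a,b_1\}$ and type-$\{b_1,b_2\}$ cycles. The main obstacle is the combinatorial bookkeeping of this insertion: the three pair-relations $\{a,b_1\}$, $\{a,b_2\}$, and $\{b_1,b_2\}$ at $v$ cannot all be realised simultaneously as consecutive pairs in an alternating path, so one must carefully show that a single insertion location suffices by appealing to the structure of $P'$ and the fact that certain cycles are already covered through their portions lying outside $\{a,b_1,b_2\}$.
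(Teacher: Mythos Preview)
Your overall strategy---reduce via Lemma~\ref{lem:disj}, induct, split into the degree-$2$ and degree-$3$ cases---matches the paper's, and your degree-$2$ case is fine. The gap is in the degree-$3$ case, and it stems from contracting the \emph{wrong} edge.

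You take $d_A(v)=1$, $d_B(v)=2$ and then \emph{contract the singleton-side edge $a$ and delete a pair-side edge $b_1$}. The paper does the opposite: with two $A$-edges $a_1,a_2$ and one $B$-edge $b$ at $v$, it \emph{contracts a pair-side edge $a_2$ and deletes the singleton-side edge $b$}. This asymmetry is decisive. In the paper's setup, any cycle $C$ using both pair-side edges $a_1,a_2$ (and not $b$) satisfies that $C-a_2$ is a \emph{circuit} of $G'=G/a_2\setminus b$, because contracting $a_2$ identifies the two ends of the path $C-a_2$; hence $P'$ already covers it, and the insertion $P=(P'-a_1c)\cup\{a_1b,\,ba_2,\,a_2c\}$ for any $P'$-neighbour $c$ of $a_1$ works with a three-line verification. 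In your setup, a type-$\{b_1,b_2\}$ cycle $C$ (with $a\notin C$) has $C-b_1$ a \emph{path} in $G'=G/a\setminus b_1$ whenever $C$ avoids the other endpoint $u$ of $a$, so $P'$ need not cover it at all.

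Your ``key observation'' that each type-$\{b_1,b_2\}$ cycle contains some $A$-edge is correct but does not pin down a single insertion location: different such cycles can use different $A$-edges. Already for $K_4$ with $A=\{13,14,24\}$, $B=\{12,23,34\}$, $v=3$, $a=13$, $b_1=23$, $b_2=34$, the bad cycle $\{23,34,24\}$ forces $b_1$ next to $24$, but the insertion $14,34,24,23,13,12$ (breaking $\{24,12\}$) \emph{uncovers} the triangle $\{12,24,14\}$. One can rescue this particular example by breaking $\{34,24\}$ instead, but your proposal gives no rule for choosing the break-point and no argument that a good one always exists; you yourself flag this as ``the main obstacle'' without resolving it.

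The clean fix is simply to swap which edge you contract: contract one of $b_1,b_2$ and delete $a$ (equivalently, arrange $d_A(v)=2$, $d_B(v)=1$). Then the only non-trivial case---cycles through $v$ using both pair-side edges---projects to a circuit of $G'$, and the insertion next to the surviving pair-side edge is canonical.
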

\begin{proof}
Let $G=(V,E)$ be a graph with graphic matroid $M(G)$. We prove the theorem by induction on $|E|$. Note that \ref{prop:pp} clearly holds when $E=\emptyset$. Take a pair of bases $A,B$. As the class of graphic matroids is closed under taking minors, Lemma~\ref{lem:disj} applies, hence we may assume that $E$ is the disjoint union of $A$ and $B$. Furthermore, we may assume that $G$ is connected as otherwise we can pick a vertex in each connected component and identify those, thus obtaining a connected graph with the same graphic matroid as the original one. 

By the above, we have $|E|=|A|+|B| = 2(|V|-1)$, hence $G$ has a vertex $v$ of degree at most three. If $v$ has degree two, then $G$ has an edge $a \in A$ and an edge $b \in B$ incident to $v$. Since $G' \coloneqq G-v$ is the union of disjoint spanning trees $A-a$ and $B-b$, there exists a path $P'$ covering the circuits of $M(G')$ by the induction hypothesis. As every cycle of $G$ passing through $v$ uses the edges $a$ and $b$, adding an extra edge to $P'\cup \{ab\}$ between $a$ and the endpoint of $P'$ in $B$ results in an alternating path between $A$ and $B$ covering the circuits of $M(G)$.

If $v$ has degree three, then we may assume that edges $a_1, a_2 \in A$ and $b \in B$ are incident to $v$. Consider the graph $G'$ obtained by contracting $a_2$ and deleting $b$, let $A'$ denote the spanning tree of $G'$ obtained from $A$ by contracting $a_2$, and define $B'\coloneqq B-b$. By the induction hypothesis, there exists a path $P'$ alternating between $A'$ and $B'$ that covers the circuits of $M(G')$. Let $c$ be a neighbour of $a_1$ in $P'$ and consider the path $P \coloneqq (P'-a_1c)\cup\{a_1b,ba_2,a_2c\}$. We claim that $P$ covers every circuit $C$ of $M(G)$. If $C$ corresponds to a cycle of $G$ that does not pass through $v$, then it is also a circuit of $M(G')$, and therefore it is covered by $P'-a_1c$. If $b \in C$, then either $a_1 \in C$ or $a_2 \in C$, hence $C$ is covered either by $a_1b$ or $ba_2$. The only remaining case is when $a_1, a_2 \in C$. Let $C'$ denote the cycle of $G'$ obtained by contracting $a_2$ in $C$. Then $C'$ is covered by an edge of $P'$. If $c \not\in C$, then this is an edge of $P'-a_1c$ as well, otherwise $C$ is covered by the edge $a_2c$. This proves that $P$ covers all circuits of $M(G)$.
\end{proof}

\begin{rem} \label{rem:graphic}
\normalfont
The fact that the path in \ref{prop:pp} is defined on the edges of the graph might be confusing, hence let us rephrase Theorem~\ref{thm:graphic} using graph terminology as it might be of independent combinatorial interest. The theorem is equivalent to the following: For any graph that is the union of two spanning trees $A$ and $B$, its edges can be ordered in such a way that the elements of $A$ and $B$ appear alternately, and every cycle of $G$ contains two consecutive elements. 
\end{rem}

A rank-$r$ matroid is called \textbf{paving} if each set of size at most $r-1$ is independent, or in other words, each circuit has size $r$ or $r+1$. 

\begin{thm} \label{thm:paving} 
Paving matroids have property \ref{prop:pp}.
\end{thm}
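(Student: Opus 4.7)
The plan is to reduce \ref{prop:pp} for paving matroids to a pure basis-exchange construction, using that in a paving matroid of rank $r$ every circuit has size $r$ or $r+1$. First I would apply Lemma~\ref{lem:disj} (paving is easily seen to be minor-closed) to assume that $A$ and $B$ are disjoint with $E = A \cup B$, so $|A| = |B| = r$. An alternating path covering $E$ is then Hamiltonian and can be written, after choosing an orientation, as $a_1, b_1, a_2, b_2, \ldots, a_r, b_r$ for some orderings $a_1, \ldots, a_r$ of $A$ and $b_1, \ldots, b_r$ of $B$ (the case in which the path starts in $B$ is symmetric).

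Next I would exploit the paving structure. A path on $2r$ vertices has no stable set of size larger than $r$, so every circuit of size $r+1$ is automatically not a stable set and hence covered. It therefore remains to prevent any circuit of size $r$ from being a stable set. A short counting argument shows that the stable sets of size $r$ in the path $a_1, b_1, \ldots, a_r, b_r$ are exactly the $r+1$ sets
\[S_j \;=\; \{a_1, \ldots, a_j\} \cup \{b_{j+1}, \ldots, b_r\}, \qquad j = 0, 1, \ldots, r.\]
Since in a paving matroid of rank $r$ every $r$-element subset is either a basis or a circuit, the task collapses to choosing the orderings of $A$ and $B$ so that each $S_j$ is a basis; note that $S_0 = B$ and $S_r = A$ are bases by hypothesis.

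I would then produce the required orderings by iterated basis exchange. Set $S_0 = B$; at step $i \in \{1, \ldots, r\}$, pick any $a_i \in A \setminus S_{i-1}$ and find $b_i \in S_{i-1} \cap B$ with $S_i := S_{i-1} - b_i + a_i$ a basis. Such a $b_i$ exists by the standard exchange axiom: the fundamental circuit $C_{S_{i-1}}(a_i) \subseteq S_{i-1} + a_i$ cannot lie entirely in $A$ (as $A$ is independent), so it meets $S_{i-1} \cap B$, and any element of that intersection can be exchanged out. Inductively $S_i = \{a_1, \ldots, a_i\} \cup \{b_{i+1}, \ldots, b_r\}$ is a basis, and after $r$ exchanges we reach $S_r = A$, producing orderings with the desired property.

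The conceptual heart of the argument, and the only place where the paving hypothesis is really used, is the second paragraph, where the restricted circuit sizes collapse the covering requirement to the single condition that the $r+1$ maximum stable sets $S_j$ be bases; once that is in hand, the exchange construction is routine. I expect the main (minor) obstacle to be the careful verification that the maximum stable sets of the alternating path are exactly the $S_j$'s, which amounts to a direct case analysis on the $2r$ positions showing that any $r$-subset with no two consecutive positions has the claimed block form. Note that the argument does not extend to general matroids precisely because smaller circuits could appear as smaller stable sets of the path, and the $S_j$-basis condition does not control those.
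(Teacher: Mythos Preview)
Your proof is correct and follows essentially the same approach as the paper: both reduce via Lemma~\ref{lem:disj} to disjoint $A,B$ with $E=A\cup B$, and both build the alternating path one $(a_i,b_i)$ pair at a time via basis exchange so that every intermediate set $\{a_1,\dots,a_j\}\cup\{b_{j+1},\dots,b_r\}$ is a basis. The paper packages this as induction on $|E|$ (contracting $b_r$ and deleting $a_r$, then appending $a_r,b_r$ to the inductively obtained path), whereas you unroll the same construction directly and make the list of maximum stable sets explicit; the paper's verification that ``$A$ is the only stable set of size $r$ containing $a_r$'' is exactly the special case of your enumeration of the $S_j$'s.
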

\begin{proof} 
We prove the theorem by induction on $|E|$. Note that \ref{prop:pp} clearly holds when $E=\emptyset$. Take a pair of bases $A,B$. As the class of graphic matroids is closed under taking minors, Lemma~\ref{lem:disj} applies, hence we may assume that $E$ is the disjoint union of $A$ and $B$.

Let $a_r \in A$ and $b_r \in B$ such that $A-a_r+b_r$ is a basis. Then $A-a_r$ and $B-b_r$ are bases of the paving matroid $M'$ obtained from $M$ by contracting $b_r$ and deleting $a_r$. By the induction hypothesis, there exist orderings $a_1,\dots, a_{r-1}$ of $A-a_r$ and $b_1,\dots, b_{r-1}$ of $B-b_r$ such that the path $P' = a_1,b_1,\dots, a_{r-1}, b_{r-1}$ covers the circuits of $M'$. We claim that the path $P = a_1,b_1,\dots, a_r, b_r$ covers each circuit of $M$. Since $M$ is paving, we only need to consider circuits $C$ of size $r$ as circuits of size $r+1$ are clearly covered. If $a_r \not \in C$, then $C-b_r$ contains a circuit of $M'$ which is covered by $P'$. If $a_r \in C$, then $C$ is covered by $P$ as $A$ is the only stable set of size $r$ containing $a_r$. Therefore, $P$ covers every circuit of $M$.
\end{proof}

A rank-$r$ \textbf{spike} with \textbf{tip} $t$ and \textbf{legs} $\{t,x_1,y_1\}, \dots, \{t,x_r,y_r\}$ is a rank-$r$ matroid on the ground set $E=\{t,x_1,y_1,\dots, x_r,y_r\}$ such that each leg is a circuit and, for $1 \le k \le r-1$, the union of any $k$ legs has rank $k+1$. Equivalently, the family of the non-spanning circuits of a spike consists of the legs, all sets of the form $\{x_i, y_i, x_j, y_j\}$ with $1 \le i < j \le r$, and a subset $\cH \subseteq \{Z\subseteq E \mid |Z|=r,\ |Z\cap\{x_i,y_i\}|=1\ \text{for $i=1,\dots,r$}\}$ such that the intersection of any two members of $\cH$ has size at most $r-2$.

\begin{thm} \label{thm:spike} Spikes have property \ref{prop:pp}.
\end{thm}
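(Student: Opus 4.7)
My plan is to prove Theorem~\ref{thm:spike} by a case analysis on the position of the tip $t$ with respect to $A$ and $B$. First, suppose $t\in A\cap B$. I would contract $t$. Because each leg $\{t,x_i,y_i\}$ is a circuit of $M$, the pair $\{x_i,y_i\}$ becomes a parallel class in $M/t$; the leg-union rank formula $r_M\bigl(\bigcup_{i\in I}\{t,x_i,y_i\}\bigr)=|I|+1$ further implies that no dependencies arise between distinct legs in $M/t$. Hence $M/t$ is the direct sum of $r$ parallel pairs, a partition matroid, which is strongly base orderable and therefore satisfies \ref{prop:pp}. Applying \ref{prop:pp} to the pair $A-t$, $B-t$ of bases of $M/t$ yields an alternating path on $A\triangle B$, and I verify that this path covers the circuits of $M$ in $A\cup B$: legs $\{t,x_i,y_i\}\subseteq A\cup B$ reduce to parallel circuits $\{x_i,y_i\}$; each $4$-element circuit $\{x_i,y_i,x_j,y_j\}$ contains such a parallel pair; and each $H\in\mathcal{H}$ with $H\subseteq A\cup B$ remains a circuit of $M/t$ because a short rank computation gives $r_{M/t}(H)=r_M(H+t)-1=r-1=|H|-1$.

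When $t\notin A\cap B$, I would proceed by induction on $r$. The case $r=1$ is trivial, and for $r\in\{2,3\}$ the spike is a paving matroid (legs have size $3$, and all other circuits have size $r$ or $r+1$), so Theorem~\ref{thm:paving} applies. For $r\ge 4$, I would pick an index $i$ and contract one element of $L_i=\{x_i,y_i\}$ while deleting the other so that the resulting rank-$(r-1)$ matroid is again a spike and the inductive hypothesis applies to the pair of bases obtained from $A,B$; the returned path is then extended by one or two edges to handle the circuits of $M$ that involve $L_i$. In the sub-case $t\in A\triangle B$, say $t\in A\setminus B$, I would additionally pair $t$ with a suitable element of $B$ --- for instance an element of $B$ lying in the leg $L_k$ disjoint from $A\setminus t$ --- so that the leg circuits containing $t$ and lying in $A\cup B$ are covered.

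The main obstacle is to ensure that the minor obtained by contracting and deleting the two elements of $L_i$ is again a spike. Two hyperplanes $H_1,H_2\in\mathcal{H}$ differing only on $L_i$ would collapse to the same $(r-1)$-transversal and thereby violate the pairwise distance condition $|H_1\cap H_2|\le r-2$ that is required of a spike. Circumventing this requires a careful choice of $L_i$ using the combinatorial structure of $\mathcal{H}$, or, alternatively, a strengthening of the inductive statement to hold for the broader family of matroids that arise from such reductions. I expect this subtlety to be where the bulk of the work lies.
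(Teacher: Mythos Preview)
Your outline has the right high-level shape, but both the minor remark about $M/t$ and, more importantly, the identification of the ``main obstacle'' are off.

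First, the small point. The matroid $M/t$ is \emph{not} the direct sum of $r$ parallel pairs: contracting $t$ drops the rank to $r-1$, so any full transversal of the $r$ legs is dependent in $M/t$. (You even implicitly use this when you compute $r_{M/t}(H)=r-1$ for $H\in\mathcal H$.) The correct statement is that $M/t$ is the rank-$(r-1)$ truncation of that direct sum, i.e.\ a laminar matroid; it is still strongly base orderable, so your conclusion for the case $t\in A\cap B$ survives with this fix.

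The real gap is in the inductive step for $t\notin A\cap B$. Your stated obstacle --- that two members $H_1,H_2\in\mathcal H$ might differ only on $L_i$ and collapse --- cannot occur: such $H_1,H_2$ would already satisfy $|H_1\cap H_2|=r-1>r-2$, contradicting the spike axiom. In fact $M/x_i\backslash y_i$ is \emph{always} a spike with tip $t$ and legs $\{t,x_j,y_j\}$, $j\ne i$, and the induced family $\mathcal H'=\{H-x_i: H\in\mathcal H,\ x_i\in H\}$ automatically satisfies $|H'_1\cap H'_2|\le (r-1)-2$. So the minor is never the problem.

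What \emph{is} the problem is covering the circuits of $M$ that contain the \emph{deleted} element. Say $x_i\in A\setminus B$ is contracted and $y_i\in B\setminus A$ is deleted, and $H\in\mathcal H$ satisfies $y_i\in H\subseteq A\cup B$. By the spike axiom, $H-y_i+x_i\notin\mathcal H$, so $H-y_i$ is a \emph{basis} of $M/x_i\backslash y_i$, not a circuit; the inductive path $P'$ is therefore under no obligation to have an edge inside $H-y_i$. Since $x_i\notin H$, the only edges of an extended path $P$ that could cover $H$ are those incident to $y_i$, and there are at most two such edges. But there can be many $H\in\mathcal H$ with $y_i\in H\subseteq A\cup B$, pairwise intersecting in at most $r-2$ elements, so two neighbours of $y_i$ do not suffice to hit them all. ``Extending by one or two edges'' does not handle this.

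The paper avoids this difficulty by organising the induction differently. When $A\cap B\ne\emptyset$ it always contracts an element of $A\cap B$ (so every circuit through the contracted element becomes dependent in the minor and is handled by $P'$) and deletes an element \emph{outside} $A\cup B$ (so no circuit in $A\cup B$ contains it); if no such outside element exists it drops to a strongly base orderable minor of $M/t$. When $A\cap B=\emptyset$ it abandons induction altogether and writes down explicit alternating paths case by case, checking directly that each stable set of size $r$ is a basis. Your plan would need either a similar explicit endgame for $A\cap B=\emptyset$, or a much stronger inductive hypothesis that controls where the $(r-1)$-transversals sit relative to the returned path.
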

\begin{proof}
Let $M$ be a spike defined as above. Observe that a set $Z \subseteq E$ is a basis if and only if one of the following holds.
\begin{enumerate}\itemsep0em
\item $t \in Z$, $\{x_\ell, y_\ell\} \cap Z = \emptyset$ for an index $1 \le \ell \le r$, and $|\{x_i,y_i\} \cap Z| = 1$ for  all $i \in \{1,\dots, r\}\setminus\{\ell\}$.
\item $t \not \in Z$ and there exist distinct indices $k$ and $\ell$ such that $\{x_k,y_k\} \subseteq Z$, $\{x_\ell, y_\ell\} \cap Z = \emptyset$ and $|\{x_i,y_i\} \cap Z| = 1$ for all $i \in \{1,\dots, r\} \setminus \{k,\ell\}$.
\item $t \not \in Z$, $|\{x_i,y_i\} \cap Z| = 1$ for all $1 \le i \le r$, and $Z \not \in \cH$.
\end{enumerate}
We show by induction on $r$ that each pair $A, B$ of bases of $M$ has the property required by \ref{prop:pp}. The statement clearly holds for $r\leq 2$, hence we assume that $r\geq 3$.

First consider the case when $A \cap B \ne \emptyset$. If $t \in A \cap B$, then the statement follows by applying \ref{prop:pp} to the pair $A-t, B-t$ of bases of the matroid $M/t$. Note that $M/t$ is a laminar, thus strongly base orderable matroid with family of bases $\{Z\subseteq E-t \mid |Z| = r-1, |\{x_i,y_i\} \cap Z| = 1 \text{ for all $1 \le i \le r$}\}$. If $t \not \in A \cap B$, then we may assume that $x_r \in A \cap B$. If $t \not \in A \cup B$, then the statement follows by applying the induction hypothesis to the pair of bases $A-x_r, B-x_r$ of the matroid $M/x_r\backslash t$ which is a spike with tip $y_r$ and legs $\{x_1,y_1\}, \dots, \{x_{r-1}, y_{r-1}\}$. Similarly, if $y_r \not \in A \cup B$, then we can apply the induction hypothesis to the pair of bases $A-x_r, B-x_r$ of $M/x_r\backslash y_r$ which is a spike with tip $t$ and legs $\{x_1,y_1\}, \dots, \{x_{r-1}, y_{r-1}\}$. It remains to consider the case $t \in A \triangle B$, $x_r \in A\cap B$ and $y_r \in A \cup B$, when we may assume that $t \in A \setminus B$ and $y_r \in B \setminus A$. As $z$ and $y_r$ are parallel in $M/x_r$, the matroids $M/\{x_r,t\} \backslash \{y_r\}$ and $M/\{y_r,x_r\}\backslash\{t\}$ are the same. Let us denote this matroid by $M'$ -- note that $M'$ is strongly base orderable as it is the minor of the laminar matroid $M/t$. Let $P'$ denote a path obtained by applying \ref{prop:pp} to the pair of bases $A-x_r-t$ and $B-x_r-y_r$ of $M'$. Then $P'+y_rt$
satisfies the requirements of \ref{prop:pp} for the basis pair $A, B$ in $M$, as the union of any independent set of $M'$ and either $\{x_r,t\}$ or $\{x_r,y_r\}$ is independent in $M$.

In the remaining cases $A \cap B = \emptyset$. Suppose first that $t \in A \cup B$. We may assume that $A = \{t,x_1,\dots,x_{r-1}\}$, and either $B=\{y_2,\dots,y_r, x_r\}$ or $B=\{y_1,\dots,y_r\}$. If $B=\{y_2,\dots, y_r,x_r\}$, then the path $P=x_2,y_2,\dots, x_{r-1}, y_{r-1}, t, x_r, x_1, y_r$ alternating between $A$ and $B$ covers each circuit of $M\backslash y_1$. Indeed, $x_iy_i \in P$ for each $2\le i \le r-1$, it covers the leg $\{t,x_r,y_r\}$ and it has no stable set $Z$ with $|\{x_1,y_1\} \cap Z| = \dots = |\{x_r,y_r\} \cap Z| = 1$. If $B=\{y_1,\dots, y_r\}$, then the path $P=t,y_r,x_1,y_1,\dots, x_{r-1},y_{r-1}$ covers each circuit of $M \backslash x_r$. Indeed, $x_iy_i \in P$ for each $1 \le i \le r-1$ and the only stable set $Z$ of $P$ with $|\{x_1,y_1\}\cap Z| = \dots = |\{x_r,y_r\}\cap Z| = 1$ is the basis $Z=\{y_1,\dots, y_r\} = B$.

It remains to consider cases with $A \cap B = \emptyset$ and $A \cup B = E-t$. If $A$ contains a pair $\{x_i,y_i\}$, then we may assume that $A=\{x_1,y_1,x_3,x_4,\dots, x_r\}$ and $B=\{x_2,y_2,y_3,y_4,\dots,y_r\}$. Consider the paths $P_1 = x_1,y_2,y_1,x_2,x_3,y_3,\dots,x_r,y_r$ and $P_2 = x_1,x_2,y_1,y_2,x_3,y_3,\dots, x_r,y_r$. Each of $P_1$ and $P_2$ covers the circuits of the form $\{x_i,y_i,x_j,y_j\}$ for $1 \le i < j \le r$, since both of them contains the edges $x_3y_3, \dots, x_ry_r$ and covers $\{x_1,y_1,x_2,y_2\}$. The only set $Z$ with $|\{x_1,y_1\} \cap Z| = \dots = |\{x_r,y_r\} \cap Z| = 1$ which is a stable set of $P_1$ or $P_2$ is $Z_1 = \{x_1,x_2,y_3,y_4,\dots, y_r\}$ and $Z_2 = \{x_1,y_2,y_3,\dots, y_r\}$, respectively. Since $|Z_1 \cap Z_2| = r-1$, at least one of $Z_1$ and $Z_2$ is not in $\cH$ and thus at least one of $P_1$ and $P_2$ covers every circuit of $M\backslash t$.

In the only remaining case we may assume that $A=\{x_1,\dots, x_r\}$ and $B=\{y_1,\dots, y_r\}$. We claim that the path $P=y_r,x_1,y_1,\dots,x_{r-1},y_{r-1},x_r$ covers each circuit of $M\backslash t$. Indeed, $x_iy_i \in P$ for each $1 \le i \le r-1$ and the only stable sets $Z$ of $P$ with $|\{x_1,y_1\}\cap Z| = \dots = |\{x_r,y_r\}\cap Z| = 1$ are the bases $Z=\{x_1,\dots, x_r\} = A$ and $Z=\{y_1,\dots, y_r\} = B$.
\end{proof}

\subsection{Applications for covering problems}
\label{sec:cor}

Motivated by a conjecture of Du, Hsu and Hwang~\cite{du1993hamiltonian}, Erd\H{o}s \cite{erdos1990some} popularized the so-called \emph{cycle plus triangles problem} which asked whether every 4-regular graph that is the edge-disjoint union of a Hamiltonian cycle and pairwise vertex-disjoint triangles is 3-colorable. Fleischner and Stiebitz~\cite{fleischner1992solution} answered this question in the affirmative. In the past decades, their work was followed by a series of papers that studied related problems, summarized below.

\begin{thm}{\normalfont(Fleischner and Stiebitz, McDonald and Puleo, Haxell)} \label{thm:chromatic} 
\begin{enumerate}[label=(\alph*)]\itemsep0em 
\item If a graph $G$ is the edge-disjoint union of a Hamiltonian cycle and some pairwise vertex-disjoint triangles, then $G$ is 3-colorable. \label{it:triangles}
\item If $k \ge 4$ and a graph $G$ is the edge-disjoint union of a Hamiltonian cycle and some pairwise vertex-disjoint complete subgraphs each on at most $k$ vertices, then $G$ is $k$-colorable. \label{it:cliques}
\item If $k \ge 4$ and a graph $G$ is the edge-disjoint union of a 2-regular bipartite graph and some pairwise disjoint cliques each on at most $k$ vertices, then $G$ is $k$-colorable. \label{it:even2factor}
\item If $k \ge 5$ and a graph $G$ is the edge-disjoint union of a 2-regular graph and some pairwise vertex-disjoint complete subgraphs each on at most $k$ vertices, then $G$ is $k$-colorable. \label{it:color_2factor}
\end{enumerate}
\end{thm}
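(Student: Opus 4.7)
The plan is to recognize Theorem~\ref{thm:chromatic} as a compilation of four published results from different sources and to split the argument accordingly, attributing each part to its origin rather than attempting a single unified proof from scratch. The common engine behind parts (b)--(d) is Haxell's theorem on independent transversals, which asserts that if a graph $H$ has maximum degree at most $\Delta$ and its vertex set is partitioned into classes each of size at least $2\Delta$, then $H$ admits an independent transversal. In our setting, the 2-regular graph (respectively Hamilton cycle) plays the role of $H$ with $\Delta=2$, and the vertex-disjoint cliques provide the partition, with each part padded harmlessly to size exactly $k$ if necessary. A proper $k$-coloring of $G$ is then exactly a decomposition of $V(G)$ into $k$ independent transversals, so one plans to iteratively extract a color class (an independent transversal) and recurse on the residual graph, verifying that the Haxell hypothesis is preserved.

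For part (a) I would follow Fleischner and Stiebitz: one establishes 3-choosability of $G$ via the Alon--Tarsi method applied to the graph polynomial $\prod_{uv\in E(G)}(x_u-x_v)$, showing that the coefficient of $\prod_v x_v^2$ is nonzero through a parity-based count of Eulerian orientations of $G$. Part (b) uses the Hamiltonicity of the underlying 2-factor for an inductive peeling argument on the cliques, shaving off one color at a time while preserving the degree condition. Parts (c) and (d) follow by iterated application of Haxell's theorem; the bound $|V_i|\le k$ combined with $2\Delta=4$ explains why the threshold lands at $k\ge 4$ in (c), where bipartiteness of the 2-factor rules out the odd-closed-walk obstruction, while the general 2-regular case (d) loses one color's worth of slack and needs $k\ge 5$.

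The main obstacle in a self-contained treatment would be the tight threshold $k=3$ in part (a), which cannot be reached by Haxell's theorem alone (since $2\Delta=4>3$) and genuinely requires the algebraic content of the Combinatorial Nullstellensatz and the nontrivial coefficient computation. Haxell's theorem itself rests on topological Sperner-type arguments, so treating (b)--(d) as black-box corollaries keeps the exposition short at the cost of invoking substantial external machinery. For the purposes of the present paper, where Theorem~\ref{thm:chromatic} is a tool rather than a contribution, citing the four sources and outlining these techniques is the natural and economical route.
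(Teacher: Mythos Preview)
Your overall framing---that Theorem~\ref{thm:chromatic} is a compilation of cited results rather than something proved here---matches the paper exactly, and your treatment of part~(a) via Fleischner--Stiebitz and the Alon--Tarsi polynomial method is on target. The paper likewise simply attributes (a) and (b) to Fleischner and Stiebitz.

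There is, however, a genuine gap in your plan for parts~(c) and~(d). You propose to obtain a full $k$-coloring by \emph{iterating} Haxell's independent-transversal theorem: pad the cliques to size $k$, extract one independent transversal, and recurse. The problem is that the Haxell hypothesis is \emph{not} preserved under this recursion. After removing one transversal the classes shrink to size $k-1$, while the residual $2$-regular graph still has maximum degree $2$ (removing one vertex from a cycle leaves a path, not a graph of smaller maximum degree). So already at the second step you need $k-1\ge 2\Delta=4$, at the third $k-2\ge 4$, and so on; for $k=4$ in~(c) or $k=5$ in~(d) the iteration stalls long before you have extracted $k$ color classes. The observation that $k\ge 2\Delta$ explains why \emph{one} transversal exists, not why $k$ disjoint ones do, and bipartiteness of the $2$-factor does not rescue the induction.

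The paper sidesteps this by citing stronger off-the-shelf statements that already deliver the full coloring. For~(d) it invokes Haxell's \emph{strong chromatic number} bound---a graph that decomposes into cliques of size $k$ and a graph $H$ with $k\ge 3\Delta(H)-1$ is $k$-colorable---which with $\Delta(H)=2$ gives precisely $k\ge 5$. For~(c) it invokes McDonald and Puleo's result that a graph decomposing into $k$-cliques and a $2$-regular graph with at most one odd cycle of length exceeding three is $k$-colorable for $k\ge 4$; a bipartite $2$-factor has no odd cycles at all, and padding the small cliques up to size $k$ with fresh vertices is harmless. If you want to keep your write-up, replace the iterated independent-transversal argument by direct appeals to these two theorems.
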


Statement \ref{it:cliques} was proven by Fleischner and Stiebitz~\cite{fleischner1997remarks}. McDonald and Puleo~\cite{mcdonald2022strong} verified for $k \ge 4$ that if a graph is decomposable into cliques on exactly $k$ vertices and a 2-regular graph with at most one odd cycle of length exceeding three, then it is $k$-colorable. This implies statement~\ref{it:even2factor}, as we can extend the cliques having less than $k$ vertices and the 2-regular bipartite graph by adding extra vertices and edges to ensure that each clique has exactly $k$ vertices. Finally, statement \ref{it:color_2factor} follows from the result of Haxell~\cite{haxell2004strong} that a graph is $k$-colorable if it is decomposable into cliques on $k$ vertices and a graph $H$ such that $k \ge 3\Delta(H)-1$. Note that the complete graph on four vertices shows that statements \ref{it:cliques} and \ref{it:even2factor} do not hold for $k=3$, while it is open whether \ref{it:color_2factor} holds for $k=4$. 

\begin{thm}\label{thm:partition}
Let $M_1=(E,\cI_1)$ be a 2-coverable matroid and $M_2=(E,\cI_2)$ be a $k$-coverable partition matroid.
\begin{enumerate}[label=(\alph*)]\itemsep0em 
    \item If $k \ge 3$ and $M_1$ satisfies \ref{prop:p}, then $\beta(M_1\cap M_2) \le k$. \label{it:partition_p}
    \item If $k \ge 4$ and $M_1$ satisfies \ref{prop:rp}, then $\beta(M_1\cap M_2) \le k$. \label{it:partition_rp}
    \item If $k\ge 5$ and $M_1$ satisfies \ref{prop:r}, then $\beta(M_1\cap M_2) \le k$. \label{it:partition_r}
\end{enumerate}
\end{thm}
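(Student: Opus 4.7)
The strategy is to reduce the covering problem to a graph coloring problem and invoke Theorem~\ref{thm:chromatic}. Since $\beta(M_1)\le 2$, there exist $C_1,C_2\in\cI_1$ with $C_1\cup C_2=E$; extending each $C_i$ inside $M_1$ to a basis yields a pair $A,B$ of bases of $M_1$ with $A\cup B=E$. I would then apply the relevant property of $M_1$ to $(A,B)$ to obtain a graph $G$ on $A\triangle B$ covering $\cC(M_1)[A\cup B]=\cC(M_1)$: a path in case (a), a $2$-regular bipartite graph with cycles alternating between $A\setminus B$ and $B\setminus A$ in case (b), and a $2$-regular graph in case (c). In case (a) I would extend the path to a Hamiltonian cycle $G^+$ on $E$ by appending the vertices of $A\cap B$ to its ends and closing the cycle, so that every edge of $G$ is preserved.

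Let $H$ be the graph on $E$ obtained as the union of $G$ (or $G^+$ in case (a)) with the cliques $K[P_i]$ on the partition classes $P_1,\dots,P_m$ of $M_2$. Then any stable set $S\subseteq E$ of $H$ is a common independent set of $M_1$ and $M_2$: the cliques force $|S\cap P_i|\le 1$ for every $i$, so $S\in\cI_2$; and every circuit $C\in\cC(M_1)$ lies in $A\cup B$ and therefore contains an edge of $G$ inside $C\cap(A\triangle B)$, hence $C\not\subseteq S$ and $S\in\cI_1$. Viewing $H$ as the edge-disjoint union of the covering subgraph and the vertex-disjoint cliques (each of size at most~$k$), I would then apply Theorem~\ref{thm:chromatic}\,(b) for case (a) with $k\ge 4$, Theorem~\ref{thm:chromatic}\,(c) for case (b), and Theorem~\ref{thm:chromatic}\,(d) for case (c). The resulting $k$-coloring of $H$ partitions $E$ into $k$ common independent sets.

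The main difficulty is case (a) with $k=3$, where Theorem~\ref{thm:chromatic}\,(a) requires pairwise vertex-disjoint triangles rather than cliques of size at most~$3$. I would handle this by augmenting each $P_i$ of size less than~$3$ with phantom elements to form a triangle $\tilde P_i$, and extending $G^+$ to a Hamiltonian cycle on $E\cup\{\text{phantoms}\}$ by iteratively subdividing one of the closing edges of $G^+$ (which lie outside $G$, so the covering property is preserved). Applying Theorem~\ref{thm:chromatic}\,(a) and restricting the resulting $3$-coloring to $E$ then produces the required partition. A smaller subtlety in cases (b) and (c) is that the vertices of $A\cap B$ are isolated in the covering subgraph of $H$; this is harmless since they participate only in cliques, a situation that is accommodated by Theorem~\ref{thm:chromatic} through its clique part.
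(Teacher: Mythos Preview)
Your proposal is correct and follows essentially the same route as the paper: build a graph on $E$ from the covering path/$2$-factor together with cliques on the partition classes, observe that its stable sets are common independent sets, and invoke the appropriate part of Theorem~\ref{thm:chromatic} to $k$-color it. The paper handles the embedding issues more tersely---for $k=3$ it simply asserts that $P\cup Q$ is a subgraph of a cycle-plus-triangles graph, and for $k\ge 4$ it closes $P$ into a cycle---whereas you spell out explicitly how to incorporate the vertices of $A\cap B$ into a Hamiltonian cycle and how to pad small partition classes with phantom vertices; both amount to the same ``take a $k$-colorable supergraph'' argument.
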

\begin{proof}
To prove \ref{it:partition_p}, let $E=A\cup B$ be a decomposition of $M_1$ into two bases and $P$ be a path on $A\triangle B$ covering the circuits of $M_1$. Let $E_1,\dots,E_q$ denote the partition classes of $M_2$ and let $Q$ denote the graph obtained by taking the union of complete graphs on $E_i$ for $i=1,\dots, q$. We claim that the graph $G\coloneqq P\cup Q$ is $k$-colorable. Indeed, if $k=3$, then $G$ is a subgraph of a graph appearing in Theorem~\ref{thm:chromatic}\ref{it:triangles}, while for $k\ge 4$ we can apply Theorem~\ref{thm:chromatic}\ref{it:cliques} after adding an extra edge to $G$ between the end vertices of $P$. As each stable set of $P$ is independent in $M_1$ and each stable set of $Q$ is independent in $M_2$, the $k$-colorability of $G$ implies that $\beta(M_1\cap M_2) \le k$. 

Parts~\ref{it:partition_rp} and \ref{it:partition_r} can be proved analogously using Theorem~\ref{thm:chromatic}\ref{it:even2factor} and \ref{it:color_2factor}, respectively.
\end{proof}

One of the main consequences of the results discussed so far is that we confirm Conjecture~\ref{conj:ab} for instances that were not settled before.

\begin{cor} 
Conjecture~\ref{conj:ab} holds if $M_1$ is either a graphic matroid, a paving matroid or a spike with $\beta(M_1)=2$, and $M_2$ is a partition matroid.
\end{cor}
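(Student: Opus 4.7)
The plan is to deduce the corollary from the machinery already set up in the excerpt via a short case analysis on $k \coloneqq \beta(M_2)$. The trivial lower bound $\beta(M_1 \cap M_2) \ge \max\{\beta(M_1),\beta(M_2)\}$ holds in general (any covering by common independent sets is also a covering in each $M_i$), so in each case only a matching upper bound needs to be produced.

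The cases $k \in \{1,2\}$ are handled without invoking property \ref{prop:p} at all. If $k=1$, then $E$ itself is independent in $M_2$, so every $M_1$-independent set is automatically a common independent set, and $\beta(M_1 \cap M_2) = \beta(M_1) = 2 = \max\{\beta(M_1),\beta(M_2)\}$, as required by part~(1) of Conjecture~\ref{conj:ab}. If $k=2$, the result $\beta(M_1 \cap M_2) \le 3 = \max\{\beta(M_1),\beta(M_2)\}+1$ is precisely the theorem of Aharoni, Berger, and Ziv~\cite{aharoni2012edge} cited earlier, and this verifies part~(2) for arbitrary (not necessarily graphic, paving, or spike) $M_1$.

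For the remaining range $k \ge 3$, we appeal to Theorem~\ref{thm:partition}\ref{it:partition_p}, which yields $\beta(M_1 \cap M_2) \le k$ whenever $M_1$ is $2$-coverable, $M_2$ is a $k$-coverable partition matroid, and $M_1$ satisfies property \ref{prop:p}. Theorems~\ref{thm:graphic}, \ref{thm:paving}, and \ref{thm:spike} assert that graphic matroids, paving matroids, and spikes each satisfy the stronger property \ref{prop:pp}; since an alternating path between $A\setminus B$ and $B\setminus A$ is in particular a path on $A\triangle B$, property \ref{prop:pp} implies \ref{prop:p}, and Theorem~\ref{thm:partition}\ref{it:partition_p} applies. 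Combined with the lower bound, we obtain $\beta(M_1 \cap M_2) = k = \max\{\beta(M_1),\beta(M_2)\}$, establishing part~(1) of the conjecture in this regime. Since there is no genuine obstacle beyond the bookkeeping of small cases, the proof is essentially a compilation: the substantive content sits in Theorem~\ref{thm:partition} (which routes through the colouring theorems of Fleischner–Stiebitz, McDonald–Puleo, and Haxell) and in the three structural verifications of \ref{prop:pp}.
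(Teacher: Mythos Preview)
Your proof is correct and follows essentially the same approach as the paper's own proof: both dispatch the cases $\beta(M_2)\le 2$ by citing the Aharoni--Berger--Ziv result, and handle $\beta(M_2)\ge 3$ by combining Theorems~\ref{thm:graphic}--\ref{thm:spike} with Theorem~\ref{thm:partition}\ref{it:partition_p}. Your write-up merely spells out a few routine details (the trivial $k=1$ case, the lower bound, and the implication \ref{prop:pp}$\Rightarrow$\ref{prop:p}) that the paper leaves implicit.
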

\begin{proof}
The conjecture was settled for $\beta(M_2)\leq 2$ in \cite{aharoni2012edge}. For $\beta(M_2)\geq 3$, the statement follows by combining Theorems~\ref{thm:graphic}\,--\,\ref{thm:spike} and Theorem~\ref{thm:partition}\ref{it:partition_p}.
\end{proof}

Kotlar and Ziv~\cite{kotlar2005partitioning} defined an element $e$ of a matroid $M$ to be \textbf{$(k+1)$-spanned} if there exist $k+1$ pairwise disjoint sets such that $e$ is spanned by each of them in $M$. This condition is equivalent to the existence of $k$ pairwise disjoint sets not containing $e$ but spanning it, as one of the sets can be chosen to be $\{e\}$. If a matroid does not contain any $(k+1)$-spanned element, then it is not difficult to show that it is $k$-coverable. Kotlar and Ziv conjectured that if no element is $(k+1)$-spanned in either $M_1$ or $M_2$, then $\beta(M_1\cap M_2) \le k$. They verified the conjecture if $k=2$ or the ground set decomposes into $k$ bases in each of $M_1$ and $M_2$. Next we show how the absence of $(k+1)$-spanned elements can be combined with \ref{prop:r} or \ref{prop:p}. 

\begin{thm} 
Let $M_1=(E,\cI_1)$ be a 2-coverable matroid and $M_2=(E,\cI_2)$ be a matroid with no $(k+1)$-spanned elements.
\begin{enumerate}[label=(\alph*)]\itemsep0em
\item If $M_1$ satisfies \ref{prop:p}, then $\beta(M_1\cap M_2) \le k+1$. \label{it:ab_path}
\item If $M_1$ satisfies \ref{prop:r}, then $\beta(M_1\cap M_2) \le k+2$. \label{it:ab_2factor}
\end{enumerate}
\end{thm}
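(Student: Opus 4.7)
The plan is to reduce both parts of the theorem to a single combinatorial partitioning claim about $M_2$ together with the covering graph coming from $M_1$. Writing $E = A \cup B$ as a union of two $M_1$-independent sets (possible since $\beta(M_1) \le 2$) and invoking \ref{prop:p} in case (a) or \ref{prop:r} in case (b), I obtain a graph $G$ on $A \triangle B$ that covers $\cC_{M_1}[A \cup B]$: a path, hence bipartite, in case (a), and a $2$-regular graph, satisfying $\chi(G) \le 3$, in case (b). Viewing $G$ as a graph on all of $E$ by adjoining the elements of $A \cap B$ as isolated vertices, the observation from the opening of Section~\ref{sec:path} ensures that every $G$-stable subset of $E$ is $M_1$-independent. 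It therefore suffices to partition $E$ into $k+1$ classes in case (a) and $k+2$ classes in case (b), each of which is simultaneously $G$-stable and $M_2$-independent; any such partition immediately yields a covering of $E$ by common independent sets of the required cardinality.

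I would establish the existence of such a partition via the following combinatorial lemma, which seems to be of independent interest: if $N$ is a matroid on $E$ with no $(k+1)$-spanned element and $H$ is a graph on $E$ with $\Delta(H) \le 2$, then $E$ admits a partition into $k + \chi(H) - 1$ sets that are simultaneously $N$-independent and $H$-stable. Applying this with $N = M_2$ and $H = G$ yields $k+1$ classes in case (a) (since $\chi(G) = 2$) and $k+2$ classes in case (b) (since $\chi(G) \le 3$), which completes the proof.

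The lemma itself I would prove by induction on $k$. The base case $k = 1$ forces $N$ to be the free matroid: a non-coloop element $e$ would satisfy $e \in \clo(E-e)$ and hence be $2$-spanned via the disjoint spanning sets $\{e\}$ and $E-e$. In this case any proper $\chi(H)$-colouring already gives the desired partition. For the inductive step, the key is to construct an $H$-stable basis $I$ of $N$. Once such an $I$ is fixed, the deletion $N \backslash I$ has no $k$-spanned element, since any $k$ pairwise disjoint subsets of $(E \setminus I) - e$ that spanned some $e \notin I$ in $N \backslash I$ could be augmented by $I$ itself to exhibit $k+1$ pairwise disjoint sets in $E - e$ spanning $e$ in $N$, contradicting the hypothesis on $N$. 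Applying induction to $N \backslash I$ and the induced subgraph $H[E \setminus I]$ then supplies the remaining $k + \chi(H) - 2$ classes, with $I$ serving as the extra one. The main obstacle is therefore the existence of an $H$-stable basis of $N$; this is a matroid-theoretic analogue of Haxell's independent-transversal problem~\cite{haxell2004strong}, and it is precisely here that the bounded maximum degree of $H$, and the tighter chromatic control in case (a) compared to case (b), must be exploited to save the promised one colour.
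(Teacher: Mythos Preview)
Your reduction to the combinatorial lemma is sound, but the proposed inductive proof of that lemma breaks down at its key step: an $H$-stable basis of $N$ need not exist, even under the hypotheses $\Delta(H)\le 2$ and ``no $(k+1)$-spanned element''. Take $N=U_{3,4}$ and let $H$ be a Hamiltonian path on the four elements. For any $e$, the only subset of $E-e$ spanning $e$ is the full $3$-element complement, so no element is $3$-spanned and the hypotheses hold with $k=2$. Yet every basis of $N$ has three elements while every $H$-stable set has at most two, so there is no $H$-stable basis and your induction cannot descend from $k=2$ to $k=1$. The analogy with Haxell's theorem does not rescue this: Haxell-type results guarantee an independent transversal when the parts are large compared to the degree, but here the rank of $N$ can exceed the maximum stable-set size of $H$ with no such slack available.

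The paper avoids the inductive scheme entirely with a one-line greedy argument. Order the elements along the path (in case~(b), any order) and colour them in turn, giving $e_i$ the least colour that differs from those of its already-coloured $G$-neighbours and whose current colour class does not span $e_i$ in $M_2$. Since at most $k-1$ disjoint sets avoiding $e_i$ can each span $e_i$, the second condition forbids at most $k-1$ colours; the first forbids at most one colour along a path and at most two in the $2$-regular case. Hence $k+1$, respectively $k+2$, colours suffice. This local counting sidesteps any global structural requirement such as the existence of an $H$-stable basis.
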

\begin{proof}
To prove \ref{it:ab_path}, let $E=\{e_1, \dots, e_n\}$ be such that every stable set of the path defined by the ordering $e_1, \dots, e_n$ is independent in $M_1$. Color the elements in the order $e_1, \dots, e_n$ greedily with positive integers such that an element $e_i$ receives the smallest color $c$ which is distinct from the color of $e_{i-1}$, and the set of elements already having color $c$ does not span $e_i$ in $M_2$. This procedure results in a coloring such that each color class is independent in $M_2$ and form a stable set of the path, thus it is independent in $M_1$ as well. As $M_2$ contains no $(k+1)$-spanned elements, the number of colors used is at most $k+1$. This proves that $\beta(M_1\cap M_2) \le k+1$.

Statement \ref{it:ab_2factor} follows by a similar greedy argument.
\end{proof}

Aharoni and Berger \cite{aharoni2006intersection} defined a graph $G$ to be \textbf{matroidally $k$-colorable} if for every $k$-coverable matroid $M$ on the vertex set of $G$, the ground set can be decomposed into $k$ stable sets of $G$ which are independent in $M$. The following facts are not difficult to show for matroidally $k$-colorable graphs.

\begin{lem} \label{lem:col}
\mbox{}
\begin{enumerate}[label=(\alph*)]\itemsep0em
    \item A subgraph of a matroidally $k$-colorable graph is matroidally $k$-colorable. \label{it:subgraph}
    \item A matroidally $k$-colorable graph is matroidally $(k+1)$-colorable. \label{it:larger}
\end{enumerate}
\end{lem}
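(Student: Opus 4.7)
The plan is to treat the two parts separately, since they require quite different ideas.

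For part (a), I would use a direct-sum extension. Given a subgraph $H$ of a matroidally $k$-colorable graph $G$ and a $k$-coverable matroid $M$ on $V(H)$, define $M' := M \oplus F$ on $V(G)$, where $F$ is the free matroid on $V(G) \setminus V(H)$. Since the covering number of a direct sum is the maximum of the summands' covering numbers and $\beta(F) = 1$, one obtains $\beta(M') = \beta(M) \le k$, so $M'$ is $k$-coverable. Matroidal $k$-colorability of $G$ then yields a decomposition $V(G) = S_1 \cup \cdots \cup S_k$ into sets that are stable in $G$ and independent in $M'$. Setting $T_i := S_i \cap V(H)$ gives a decomposition of $V(H)$ into $k$ sets which are stable in $H$ (stable sets of $G$ restrict to stable sets of any subgraph) and independent in $M$ (independent sets of a direct sum restrict to independent sets of each summand). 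This yields the required decomposition.

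For part (b), my plan is to reduce to part (a) by peeling off a stable-and-independent class. Given a $(k+1)$-coverable matroid $M$ on $V(G)$, I would seek a set $I \subseteq V(G)$ that is (i) stable in $G$, (ii) independent in $M$, and (iii) such that $\beta(M\setminus I) \le k$. Once such $I$ is at hand, part~\ref{it:subgraph} applied to the subgraph $G[V(G)\setminus I]$ (which is matroidally $k$-colorable because it is a subgraph of $G$) and the $k$-coverable matroid $M\setminus I$ produces a decomposition of $V(G) \setminus I$ into $k$ stable-and-independent sets, which together with $I$ forms the desired matroidal $(k+1)$-coloring.

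Accordingly, the main obstacle is to establish the existence of $I$. I would begin with any partition $V(G) = I_1 \cup \cdots \cup I_{k+1}$ of $V(G)$ into independent sets of $M$ (which exists because $\beta(M) \le k+1$); the class $I_{k+1}$ automatically satisfies (ii), and (iii) is immediate from $V(G) \setminus I_{k+1} = I_1 \cup \cdots \cup I_k$, so only the stability of $I_{k+1}$ in $G$ needs to be arranged. I would pursue this via a Kempe-chain-like swapping procedure: whenever $I_{k+1}$ contains an edge $uv$ of $G$, reassign one endpoint to another class through a matroid-exchange step (using the standard circuit-elimination axiom of the matroid-partition algorithm) that preserves $M$-independence of every class, and iterate. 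A carefully chosen potential function, for instance the number of edges of $G$ inside $I_{k+1}$ with ties broken by other quantities such as the ranks of the classes, should strictly decrease along such swaps, guaranteeing termination in a configuration where $I_{k+1}$ is stable in $G$.
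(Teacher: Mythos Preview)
Your argument for part~(a) via the direct sum with a free matroid is correct and spells out what the paper dismisses as ``straightforward.''

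Part~(b), however, has a genuine gap. The entire content of the statement lies in producing the set $I$, and your swap procedure is neither defined nor justified: you do not say which endpoint moves, to which class, what happens when the target class together with the new element is dependent, or why the potential decreases. More tellingly, the procedure you sketch uses only the matroid structure of $M$ and the edge set of $G$; it never invokes the hypothesis that $G$ is matroidally $k$-colorable. That hypothesis is essential---without it the claim ``there exists a $G$-stable, $M$-independent $I$ with $\beta(M\setminus I)\le k$'' is false (e.g.\ take $G$ a large complete graph, where stable sets are singletons). Since your swap mechanism cannot distinguish a matroidally $k$-colorable $G$ from an arbitrary one, it cannot succeed as written.

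The paper's proof avoids this by using the matroidal $k$-colorability hypothesis \emph{twice}. Starting from a partition $V=I_1\cup\dots\cup I_{k+1}$ into $M$-independent sets, it first applies part~(a) to $G[I_1\cup\dots\cup I_k]$ and the $k$-coverable matroid $M\backslash I_{k+1}$, obtaining stable-independent sets $S_1,\dots,S_k$. Now $S_1$ is stable and independent, and $M\backslash S_1$ is $k$-coverable (witnessed by $S_2,\dots,S_k,I_{k+1}$), so a \emph{second} application of part~(a) to $G[V\setminus S_1]$ finishes the job. The set $S_1$ is exactly the $I$ you were seeking; the point is that it is produced by the colorability hypothesis itself, not by an exchange argument.
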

\begin{proof}
The proof of \ref{it:subgraph} is straightforward. To prove \ref{it:larger}, let $G$ be a matroidally $k$-colorable graph on vertex set $V$ and $M$ be a $(k+1)$-coverable matroid on $V$. Let $V= I_1 \cup \dots \cup I_{k+1}$ be a partition of $V$ into independent sets of $M$. As the graph $G[I_1 \cup \dots I_k]$ is matroidally $k$-coverable by \ref{it:subgraph} and $M\backslash I_{k+1}$ is a $k$-coverable matroid on its vertex set, there exist stable sets $S_1,\dots, S_k$ of $G$ which are independent in $M\backslash I_{k+1}$ such that $S_1 \cup \dots \cup S_k = I_1 \cup \dots \cup I_k$. Since $G[S_2 \cup \dots \cup S_k \cup I_{k+1}]$ is matroidally $k$-coverable by \ref{it:subgraph} and $M\backslash S_1$ is a $k$-coverable matroid on its vertex set, there exist stable sets $S'_2, \dots, S'_{k+1}$ of $G$ which are independent in $M \backslash S_1$ such that $S'_2 \cup \dots\cup S'_{k+1} = S_2  \cup \dots \cup S_k \cup I_{k+1}$. Then $V=S_1\cup S'_2 \cup \dots \cup S'_{k+1}$ is a partition of $V$ into stable sets of $G$ which are independent in $M$.
\end{proof}

As a generalization of Theorem~\ref{thm:chromatic}\ref{it:triangles}, Aharoni and Berger~\cite{aharoni2006intersection} conjectured that the cycle $C_{3\ell}$ is matroidally 3-colorable for every $\ell\geq 1$. Their conjecture, when combined with Lemma~\ref{lem:col}\ref{it:subgraph}, would imply the following.

\begin{conj}\label{conj:pathcol} 
Every path is matroidally 3-colorable.
\end{conj}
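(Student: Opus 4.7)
The plan is to proceed by induction on the number of vertices $n$ of the path $P = v_1 v_2 \cdots v_n$. The base cases $n \le 4$ are straightforward; for instance, when $n=4$ and $M$ is a $3$-coverable (hence loopless) matroid on $V(P)$, at least one of the pairs $\{v_1,v_3\}$, $\{v_1,v_4\}$, $\{v_2,v_4\}$ must be independent in $M$, since otherwise all four elements would be pairwise parallel, contradicting $3$-coverability. Any such independent pair serves as one colour class, together with the two remaining vertices as singleton classes.

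For the inductive step, let $M$ be a $3$-coverable matroid on $V(P)$ with $n \ge 5$. Apply the induction hypothesis to the sub-path $P' = v_1 \cdots v_{n-1}$ and to $M \setminus v_n$, which is still $3$-coverable, to obtain a proper $3$-colouring of $P'$ with colour classes $I_1,I_2,I_3$ independent in $M$. Assume $v_{n-1} \in I_1$. The extension to $v_n$ succeeds immediately if $v_n \notin \spa_M(I_2)$ or $v_n \notin \spa_M(I_3)$, in which case $v_n$ joins the corresponding class.

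The crux of the argument is the case where $v_n$ is spanned in $M$ by both $I_2$ and $I_3$. Here one must modify the colouring $(I_1,I_2,I_3)$ of $P'$ so as to free up at least one of classes $2$ or $3$ for $v_n$, while preserving properness along the path. The natural attempt is to take the fundamental circuits $C_j \subseteq I_j + v_n$ of $v_n$ for $j \in \{2,3\}$, pick $x_j \in C_j \setminus \{v_n\}$, and try to recolour $x_2$ (say) with colour $1$ or $3$; when the neighbours of $x_2$ on the path block the recolouring, one would propagate the change leftwards along $P'$ via iterated two-class swaps in the spirit of the bipartite swap-graph argument used in the proof of Theorem~\ref{thm:sbro}. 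The hope is that the cascade terminates before reaching $v_1$, controlled by a potential such as the smallest index involved in an unresolved conflict.

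The main obstacle, and the reason this conjecture is genuinely difficult, is that such a cascade can in principle loop: each local recolouring may open a new matroid violation further back on the path, and no obvious monotone quantity guarantees termination. A successful proof will therefore likely require either a carefully tailored potential function, a topological existence argument in the spirit of Aharoni and Berger, or a reformulation in which the colouring is encoded as a basis of $M \oplus M \oplus M$ subject to path constraints, so that the problem can be attacked via matroid intersection. Given that even the closely related conjecture that $C_{3\ell}$ is matroidally $3$-colourable remains open, the real work should be expected at precisely this step rather than in the inductive framework itself.
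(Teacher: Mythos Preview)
The statement you are attempting is labelled \emph{Conjecture} in the paper, and the paper does not prove it; it is presented there as an open problem, derived as a consequence of an earlier (also open) conjecture of Aharoni and Berger that the cycle $C_{3\ell}$ is matroidally $3$-colourable. There is therefore no ``paper's own proof'' to compare against.

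Your proposal is also not a proof. The inductive framework and the base case $n\le 4$ are fine, but in the inductive step you explicitly stop at the hard case where $v_n$ is spanned by both $I_2$ and $I_3$: you describe a natural recolouring cascade, then correctly observe that nothing guarantees it terminates and that no obvious potential function controls it. From that point on the text is a discussion of possible strategies (topological methods, a reformulation via $M\oplus M\oplus M$), not an argument. In short, you have accurately located the obstruction that makes this conjecture genuinely open, but you have not overcome it; what you have written is a research outline rather than a proof, and the conjecture remains unresolved both in the paper and in your submission.
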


\begin{rem} 
\normalfont
It is reasonable to ask whether there is a connection between Conjectures~\ref{conj:ab} and \ref{conj:pathcol}. It turns out that the latter implies the former when $M_1$ is $2$-coverable matroid satisfying \ref{prop:p}.

Indeed, let $P$ be a path on vertex set $E$ such that every stable set of $P$ is independent in $M_1$,  and let $M_2=(E,\cI_2)$ be an arbitrary $k$-coverable matroid. If $k=2$, then $\beta(M_1\cap M_2) \le 3$ holds by the results of Aharoni, Berger and Ziv~\cite{aharoni2012edge}. If $k \ge 3$ and Conjecture~\ref{conj:pathcol} holds, then $P$ is matroidally $k$-colorable by Lemma~\ref{lem:col}\ref{it:larger}, hence $E$ can be decomposed into $k$ stable sets of $P$ which are independent set in $M_2$. This gives a decomposition of $E$ into $k$ common independent sets of $M_1$ and $M_2$. 

By the above reasoning, Conjectures~\ref{conj:pp} and \ref{conj:pathcol} together would imply Conjecture~\ref{conj:ab} when $M_1$ is $2$-coverable. 
\end{rem}

Finally, let us mention an interesting result on the intersection of $q$ matroids satisfying \ref{prop:r}. Let $M_1,\dots,M_q$ be $2$-coverable matroids over the same ground set. In general, not much is known about the minimum number of common independent sets $\beta(M_1\cap\dots,\cap M_q)$ needed to cover their ground set. An obvious upper bound is the product of their individual covering numbers, that is, $\beta(M_1\cap\dots\cap M_q)\leq\prod_{i=1}^q \beta(M_i)=2^q$. Nevertheless, a much stronger upper bound follows if each $M_i$ satisfies \ref{prop:r}.  

\begin{thm} 
Let $M_1, \dots, M_q$ be 2-coverable matroids satisfying \ref{prop:r}. Then $\beta(M_1 \cap \dots \cap M_q) \le 2q+1$.
\end{thm}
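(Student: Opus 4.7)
The plan is to combine property \ref{prop:r} across the $q$ matroids with a greedy vertex coloring argument. For each $i = 1, \dots, q$, use that $M_i$ is 2-coverable to pick two independent sets whose union is $E$, then extend them to bases $A_i, B_i$ of $M_i$; this gives $A_i \cup B_i = E$. Applying \ref{prop:r} to the pair $A_i, B_i$ yields a 2-regular graph $G_i$ on vertex set $A_i \triangle B_i$ that covers $\cC[A_i \cup B_i]$, which equals the full family of circuits of $M_i$ because $A_i \cup B_i = E$.

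Next I would regard each $G_i$ as a graph on the entire ground set $E$ by adding the elements of $A_i \cap B_i$ as isolated vertices; thus $G_i$ has maximum degree at most $2$. The key observation is that any stable set $S$ of $G_i$ is independent in $M_i$: if a circuit $C$ of $M_i$ were contained in $S$, the covering property would provide an edge $uv$ of $G_i$ with $u, v \in C \cap (A_i \triangle B_i) \subseteq S$, contradicting stability of $S$.

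Finally, set $G \coloneqq G_1 \cup \dots \cup G_q$. Each $G_i$ has maximum degree at most $2$, so $\Delta(G) \le 2q$, and the standard bound $\chi(G) \le \Delta(G) + 1$ (greedy coloring) produces a proper coloring of $G$ with at most $2q+1$ colors. Each color class is stable in every $G_i$ simultaneously, hence independent in every $M_i$, giving a partition of $E$ into at most $2q+1$ common independent sets.

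The only step that requires care is the opening arrangement $A_i \cup B_i = E$, which guarantees that the graph supplied by \ref{prop:r} covers every circuit of $M_i$ rather than just those contained in some proper subset; once that is in place, the rest is an essentially immediate maximum-degree argument, and I do not anticipate a serious obstacle.
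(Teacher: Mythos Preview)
Your proposal is correct and follows essentially the same approach as the paper: obtain from \ref{prop:r} a graph of maximum degree at most $2$ on $E$ whose stable sets are independent in each $M_i$, take their union, and apply the greedy bound $\chi(G)\le\Delta(G)+1$. You simply make explicit the setup the paper leaves implicit, namely choosing bases $A_i,B_i$ with $A_i\cup B_i=E$ and extending $G_i$ by isolated vertices on $A_i\cap B_i$.
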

\begin{proof}
Let $G_i$ be a 2-regular graph covering the circuits of $M_i$ for $1 \le i \le q$. Since the graph $G \coloneqq G_1 \cup \dots \cup G_q$ has maximum degree at most $2q$, it is $(2q+1)$-colorable. Since every stable set of $G$ is stable in each $G_i$, this coloring gives a decomposition of the ground set into $2q+1$ common independent sets of the matroids. 
\end{proof}

\subsection{Applications for exchange sequences}
\label{sec:gabow}

While studying the structure of symmetric exchanges in matroids, Gabow~\cite{gabow1976decomposing} formulated a beautiful conjecture on cyclic orderings of matroids. The question was later raised again by Wiedemann \cite{wiedemann1984cyclic} and by Cordovil and Moreira~\cite{cordovil1993bases}.

\begin{conj} {\normalfont (Gabow)} \label{conj:gabow}
Let $A$ and $B$ be bases of a rank-$r$ matroid $M$. Then there exist orderings $(a_1,\dots,a_r)$ and $(b_1,\dots,b_r)$ of the elements of $A$ and $B$, respectively, such that their concatenation $(a_1,\dots,a_r,b_1,\dots,b_r)$ is a cyclic ordering in which any interval of length $r$ forms a basis.
\end{conj}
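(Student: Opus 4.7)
The plan is to attempt Gabow's conjecture by combining a reduction to disjoint bases with an inductive argument on the rank, in the spirit of the proofs of Theorems~\ref{thm:graphic}--\ref{thm:spike}. First, by an argument analogous to Lemma~\ref{lem:disj}, it suffices to treat the case $A\cap B=\emptyset$ and $E=A\cup B$: given a Gabow ordering of the minor $M/(A\cap B)\setminus(E\setminus(A\cup B))$, prepend the elements of $A\cap B$ (in an arbitrary but common order) to both the $A$- and $B$-orderings. Every length-$r$ interval of the resulting cyclic ordering then contains $A\cap B$, and deleting these elements yields a length-$(r-|A\cap B|)$ interval of the minor, which is a basis there by hypothesis, so the full interval is a basis of $M$.

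For the inductive step on $r$, I would select a corner pair $(a_r,b_r)\in A\times B$ satisfying \emph{both} $A-a_r+b_r$ and $B-b_r+a_r$ being bases of $M$. The rank-$(r-1)$ minors $M'\coloneqq M/a_r\setminus b_r$ and $M''\coloneqq M/b_r\setminus a_r$ each have $A-a_r$ and $B-b_r$ as bases. Consider the candidate orderings $(a_1,\dots,a_{r-1},a_r)$ for $A$ and $(b_1,\dots,b_{r-1},b_r)$ for $B$: the length-$r$ intervals of the form $A-\{a_1,\dots,a_i\}+\{b_1,\dots,b_i\}$ are bases of $M$ precisely when the corresponding intervals are bases of $M'$, and the intervals of the form $B-\{b_1,\dots,b_i\}+\{a_1,\dots,a_i\}$ are bases of $M$ precisely when the corresponding intervals are bases of $M''$.

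The main obstacle is that the Gabow orderings supplied by the inductive hypothesis on $M'$ and on $M''$ need not agree; we require a \emph{single} pair of orderings $(a_1,\dots,a_{r-1})$ and $(b_1,\dots,b_{r-1})$ that is simultaneously a Gabow ordering in both minors, which is exactly where the conjecture ceases to be a direct consequence of the exchange axiom. For the classes where \ref{prop:pp} is established (graphic matroids, paving matroids, and spikes), I would mirror the respective case analyses of Theorems~\ref{thm:graphic}--\ref{thm:spike}: use a degree-three vertex in the graphic case, exploit the uniform circuit structure in the paving case, and use the leg partition in the spike case to both identify a corner pair and verify the Gabow extension directly, bypassing the two-minor compatibility issue. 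For a general matroid, the natural hope is that a suitable strengthening of \ref{prop:pp} that asks the covering path to encode a symmetric exchange sequence -- rather than merely to cover circuits -- would supply exactly the compatibility needed to bridge the gap.
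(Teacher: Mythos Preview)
The statement you are attempting is Gabow's \emph{conjecture}; the paper does not prove it and does not claim to. What the paper actually establishes is the weaker Theorem~\ref{thm:weakgab}: under the extra hypothesis that $M$ satisfies \ref{prop:pp}, one obtains a cyclic ordering in which the intervals of the form $\{a_i,\dots,a_r,b_1,\dots,b_{i-1}\}$ are bases, while the ``other half'' $\{b_i,\dots,b_r,a_1,\dots,a_{i-1}\}$ are only guaranteed to have rank at least $r-1$. The paper's argument simply reads the alternating covering path $b_1,a_1,\dots,b_k,a_k$ on $A\triangle B$ and appends two copies of $A\cap B$; it makes no attempt at induction on $r$ and does not claim the full conjecture.

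Your reduction to the case $A\cap B=\emptyset$, $E=A\cup B$ is correct: with the common prefix you describe, every length-$r$ cyclic interval does contain $A\cap B$ as a set, and stripping it leaves a length-$(r-|A\cap B|)$ interval of the minor. This is a standard first step and matches how Theorem~\ref{thm:weakgab} handles $A\cap B$ as well.

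Where your plan stops being a proof is exactly where you say it does. Choosing a symmetric-exchange pair $(a_r,b_r)$ and passing to $M'=M/a_r\backslash b_r$ and $M''=M/b_r\backslash a_r$ is natural, and your identification of the two halves of the cyclic condition with Gabow orderings in $M'$ and $M''$ is correct. But the inductive hypothesis only gives you \emph{some} Gabow ordering in each minor separately, and nothing forces them to coincide; the requirement that one ordering work simultaneously in $M'$ and $M''$ is strictly stronger than the statement you are inducting on. This is precisely the well-known obstruction to a naive inductive proof of Gabow's conjecture, and no strengthening short of the conjecture itself is currently known to close it in general.

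Finally, note that for the specific classes you list the conjecture is already settled by the references the paper cites (graphic~\cite{kajitani1988ordering,cordovil1993bases,wiedemann1984cyclic}, sparse paving~\cite{bonin2013basis}, spikes~\cite{berczi2022weighted}), so routing through the proofs of Theorems~\ref{thm:graphic}--\ref{thm:spike} would at best reprove known results. Your closing suggestion that a strengthening of \ref{prop:pp} encoding a symmetric exchange \emph{sequence} might suffice is plausible in spirit, but the paper's Theorem~\ref{thm:weakgab} shows that \ref{prop:pp} as stated falls one step short on half of the intervals, and no such strengthening is formulated or proved here.
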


An easy reasoning shows that the statement holds for strongly base orderable matroids. Apart from this, the conjecture was settled for graphic matroids~\cite{kajitani1988ordering,cordovil1993bases,wiedemann1984cyclic}, sparse paving matroids~\cite{bonin2013basis}, matroids of rank at most $4$~\cite{kotlar2013serial} and $5$~\cite{kotlar2013circuits}, split matroids~\cite{berczi2022exchange}, and spikes~\cite{berczi2022weighted}. In what follows, we show that \ref{prop:pp} implies a slightly weakened version of the conjecture.

\begin{thm} \label{thm:weakgab}
Let $A$ and $B$ bases of a rank-$r$ matroid $M$ satisfying \ref{prop:pp}. Then there exist orderings $(a_1,\dots,a_r)$ and $(b_1,\dots,b_r)$ of the elements of $A$ and $B$, respectively, such that their concatenation $(a_1,\dots,a_r,b_1,\dots,b_r)$ is a cyclic ordering in which any interval $\{a_i,\dots,a_r,b_1,\dots,b_{i-1}\}$ forms a basis for $i=1,\dots,r$, and any interval $\{b_i,\dots,b_r,a_1,\dots,a_{i-1}\}$ has rank at least $r-1$ for $i=1,\dots,r$.
\end{thm}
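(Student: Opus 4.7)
The plan is to read off the desired orderings directly from a path obtained by applying \ref{prop:pp} to the pair $A, B$. Write $E_0 \coloneqq A \cap B$, $r_0 \coloneqq |E_0|$ and $r' \coloneqq r - r_0$, and let $P = v_1 v_2 \cdots v_{2r'}$ be a path on $A \triangle B$ alternating between $A\setminus B$ and $B\setminus A$ that covers $\cC[A\cup B]$. Since the two endpoints of $P$ lie on different sides of the bipartition, I would orient $P$ so that $v_1 \in B\setminus A$; then $v_{2j-1} \in B\setminus A$ and $v_{2j} \in A\setminus B$ for every $1 \le j \le r'$. Fixing any ordering $e_1, \dots, e_{r_0}$ of $E_0$, define $a_j \coloneqq b_j \coloneqq e_j$ for $1 \le j \le r_0$, and $a_{r_0+j} \coloneqq v_{2j}$, $b_{r_0+j} \coloneqq v_{2j-1}$ for $1 \le j \le r'$.

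For the first family of intervals, note that if $1 \le i \le r_0+1$ then the common elements cancel and $\{a_i,\dots,a_r,b_1,\dots,b_{i-1}\} = A$ as a set. For $i = r_0+j$ with $2 \le j \le r'$, this interval equals $E_0 \cup S_j$ where
\[ S_j \coloneqq \{v_{2j}, v_{2j+2}, \dots, v_{2r'}\} \cup \{v_1, v_3, \dots, v_{2j-3}\}. \]
The indices in $S_j$ lie either in $\{2j, 2j+2, \dots, 2r'\}$ or in $\{1, 3, \dots, 2j-3\}$, with minimum gap between the two blocks equal to three, so $S_j$ is a stable set of $P$ of size $r'$. Because $P$ covers every circuit of $M$ inside $A \cup B$ and each edge of $P$ has both endpoints outside $E_0$, no such circuit can be contained in $E_0 \cup S_j$; hence $E_0 \cup S_j$ is an independent set of size $r$, i.e.\ a basis.

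An analogous computation for the second family gives $\{b_i,\dots,b_r,a_1,\dots,a_{i-1}\} = B$ for $1 \le i \le r_0+1$, while for $i = r_0+j$ with $2 \le j \le r'$ it yields $E_0 \cup T_j$, where
\[ T_j \coloneqq \{v_{2j-1}, v_{2j+1}, \dots, v_{2r'-1}\} \cup \{v_2, v_4, \dots, v_{2j-2}\}. \]
Here $T_j$ fails to be stable only because of the adjacent pair $v_{2j-2}, v_{2j-1}$, but removing $v_{2j-2}$ produces a stable subset of size $r'-1$; the same argument as above shows that $E_0 \cup (T_j \setminus \{v_{2j-2}\})$ is independent, so the interval has rank at least $r-1$. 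I expect the main subtlety to be the choice of orientation of $P$: reading $P$ in the opposite direction interchanges the roles of the two families and would give only a rank-$(r-1)$ bound for the $a$-intervals, which is weaker than the theorem demands. Once the orientation is fixed, everything reduces to listing the maximum stable sets of an even-length path, plus the simple remark that adjoining $E_0$ preserves independence because no edge of $P$ meets $E_0$.
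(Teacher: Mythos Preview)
Your proof is correct and follows essentially the same route as the paper: apply \ref{prop:pp}, label the path so that the $B$-elements occupy the odd positions, and read off the orderings so that the first family of intervals corresponds to stable sets of the path while the second family corresponds to sets spanning exactly one edge. The only cosmetic difference is that the paper places the elements of $A\cap B$ at the \emph{end} of each ordering (indices $k+1,\dots,r$) rather than at the beginning; since each common element receives the same index in both orderings either way, the two constructions are related by a cyclic shift and the verification is identical.
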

\begin{proof}
Let $k = |A\setminus B| = |B \setminus A|$. By property \ref{prop:pp}, there exists a path $P$ that alternates between $A\setminus B$ and $B\setminus A$ such that the union of $A\cap B$ and any stable set in the path is independent in $M$. Let the elements along this path be denoted by $b_1,a_1,\dots,b_k,a_k$, where $a_i\in A\setminus B$ and $b_i\in B\setminus A$ for $i=1,\dots,k$. Furthermore, take two copies of each element in $A\cap B$ and denote those by $a_j$ and $b_j$ for $j=k+1,\dots,r$. We claim that the ordering $(a_1,\dots,a_r,b_1,\dots,b_r)$ satisfies the conditions of the theorem. To see this, observe that, for $1\leq i\leq r$, the interval $\{a_i,\dots,a_r,b_1,\dots,b_{i-1}\}$ is the union of $A\cap B$ and a set that is stable on the path $P$. Similarly, for $1\leq i\leq r$, the interval $\{b_i,\dots,b_r,a_1,\dots,a_{i-1}\}$ is the union of a copy of $A\cap B$ and a set that spans at most a single edge of the path, i.e.\ $a_{i-1}b_i$. However, deleting either $a_{i-1}$ or $b_i$ from the set, we get an independent set, concluding the proof. 
\end{proof}



\subsection{Covering circuits with few edges} \label{sec:few}

Each of properties \ref{prop:r}\,--\,\ref{prop:pp}, and hence Conjecture~\ref{conj:pp} states that the circuits of a matroid $M$ of rank $r(M)$ with $\beta(M)=2$ can be covered by $O\left(2\cdot r(M)\right)$ edges. This rightly raises the question of whether a similar statement could hold for higher values of the covering number, e.g.\  whether all circuits of a matroid $M$ can be covered by a small number of edges. The next lower bound only uses the fact that each independent set of $M$ has size at most $r(M)$.

\begin{thm} \label{thm:lower}
Let $M$ be a matroid such that its ground decomposes into disjoint bases. Then every graph covering the circuits of $M$ has at least $\Theta\left(\beta(M)^2 \cdot  r(M)\right)$ edges.
\end{thm}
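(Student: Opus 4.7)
My plan is to reduce the statement to a direct application of Turán's theorem via the elementary observation that any graph covering the circuits of $M$ has small independence number.

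Let $k = \beta(M)$ and $r = r(M)$, so that $|E| = kr$ since the ground set decomposes into $k$ disjoint bases of size $r$. The first key step is to note that if a graph $G$ on vertex set $E$ covers every circuit of $M$, then no independent set of $G$ can contain a circuit; equivalently, every stable set of $G$ is independent in $M$. Since independent sets of $M$ have size at most $r$, this yields $\alpha(G) \le r$.

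Next I would translate this to an edge lower bound by considering the complement $\overline{G}$. The condition $\alpha(G) \le r$ is equivalent to $\overline{G}$ containing no clique on $r+1$ vertices, so Turán's theorem applies: $|E(\overline{G})| \le \left(1 - \tfrac{1}{r}\right)\tfrac{n^2}{2}$ with $n = kr$. Subtracting from $\binom{n}{2}$ gives
\begin{equation*}
    |E(G)| \ge \binom{n}{2} - \left(1 - \tfrac{1}{r}\right)\tfrac{n^2}{2} \ge \tfrac{n^2}{2r} - \tfrac{n}{2} = \tfrac{kr}{2}(k-1),
\end{equation*}
which is $\Theta(k^2 r)$ as $k \to \infty$, matching the desired bound.

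There is no real obstacle here: the proof is essentially a two-line combination of (i) the definition of covering a circuit, which bounds $\alpha(G)$ by the rank, and (ii) Turán's theorem applied to $\overline{G}$. The only subtle point worth verifying is the asymptotic claim: for $k \ge 2$, the leading term $k^2 r/2$ dominates the correction $-kr/2$, so the bound is genuinely $\Theta(k^2 r)$ and not merely $\Omega(kr)$. In the degenerate case $k = 1$ the statement is vacuous, since then $E$ is itself a basis and has no circuits to cover.
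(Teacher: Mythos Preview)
Your proof is correct and follows essentially the same approach as the paper: both arguments rest on the observation that $\alpha(G)\le r(M)$ and then convert this into an edge lower bound via a Tur\'an-type inequality, arriving at the identical bound $|E(G)|\ge \tfrac{kr(k-1)}{2}$. The only cosmetic difference is that the paper cites the equivalent formulation $\alpha(G)\ge n^2/(n+2m)$ directly, whereas you apply Tur\'an's theorem to the complement $\overline{G}$.
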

\begin{proof}
Let $G=(E,F)$ be a graph covering the circuits of $M$ and let $\alpha(G)$ denote the size of the largest stable set of $G$. As each stable set of $G$ is independent in $M$, $\alpha(G) \le r(M)$ holds. It is known (see e.g.\ \cite[Theorem 13.5]{berge1970graphes}) 
that $\alpha(G) \ge \frac{n^2}{n+2m}$ holds for each graph with $n$ vertices and $m$ edges. Therefore, 

\[r(M) \ge \alpha(G) \ge \frac{|E|^2}{|E| + 2|F|} = \frac{\beta(M)^2 \cdot r(M)^2}{\beta(M) \cdot r(M) + 2 |F|},\]
which implies
\[|F| \ge \frac{\beta(M)^2 \cdot r(M)  - \beta(M) \cdot r(M)}{2} = \Theta\left(\beta(M)^2 \cdot r(M)\right).\]
\end{proof}

The lower bound of Theorem~\ref{thm:lower} is consistent with a conjecture that was suggested by the authors and Yamaguchi~\cite{berczi2021list}, stating that any matroid $M$ of rank $r(M)$ has a so-called reduction to a partition matroid with partition classes of size at most $2\cdot \beta(M)$. Indeed, taking a complete graph on the elements of each partition class would result in a graph of at most ${2\cdot \beta(M) \choose 2}\cdot r(M)$ edges that covers every circuit of the matroid. The conjecture was disproved in \cite{abolazimi2021matroid} and independently in \cite{leichter2022impossibility}, and their proofs also show a lower bound of $\Theta\left(\beta(M)^2\cdot\log\beta(M)\cdot r(M)\right)$. 

For any matroid $M$ the size of the ground set is bounded by $\beta(M)\cdot r(M)$, hence $O(\beta(M)^2\cdot r(M)^2)$ edges always suffice to cover all the circuits -- just take a complete graph on $E$. It remains an open problem whether there exists a function $f$ such that for any matroid $M$, the circuits can be covered using $O(f(\beta(M))\cdot r(M))$ edges.

\section{Conclusions}
\label{sec:conc}

In this paper, we studied relaxations of strongly base orderability of matroids, and showed that the combination of the proposed relaxations with existing results and conjectures leads to interesting observations. Our hopes are that this new approach will serve as a useful tool in matroid optimization problems, especially when working with disjoint common independent sets of two or more matroids. 

Conjecture~\ref{conj:pp} remains an intriguing open problem even for special classes of matroids. While Theorem~\ref{thm:graphic} settles the conjecture for graphic matroids, an analogous result is missing for the co-graphic case. Similarly to Remark~\ref{rem:graphic}, it is worth rephrasing the question using graph terminology. 

\begin{qu}
Given a graph that is the union of two spanning trees $A$ and $B$, is it always possible to order the edges in such a way that the elements of $A$ and $B$ appear alternately, and every cut of $G$ contains two consecutive elements?
\end{qu}

Another interesting question is to characterize the forbidden minors of SBRO matroids. We have seen that $M(K_4)$ and $J$ are forbidden minors, and one can also verify that the matroid $X_{10}$ described in Remark~\ref{rem:xten} is also a forbidden minor for this class. However, we are not aware of any further examples, though the cases of $\sbo$ and $\bo$ matroids suggests that one should expect an infinite number of forbidden minors, see~\cite{bonin2016infinite}.

\begin{qu}
What are the forbidden minors of the class $\sbro$? Is there an infinite sequence of them?
\end{qu}

Finally, let us repeat the question raised in Section~\ref{sec:few}.

\begin{qu}
Is there a function $f$ such that the circuits of any matroid $M$ can be covered by $O(f(\beta(M))\cdot r(M))$ edges?
\end{qu}

\paragraph{Acknowledgement.} Tamás Schwarcz was supported by the \'{U}NKP-22-3 New National Excellence Program of the Ministry for Culture and Innovation from the source of the National Research, Development and Innovation Fund. The work was supported by the Lend\"ulet Programme of the Hungarian Academy of Sciences -- grant number LP2021-1/2021 and by the Hungarian National Research, Development and Innovation Office -- NKFIH, grant number FK128673. 

\bibliographystyle{abbrv}
\bibliography{sbro}

\begin{thebibliography}{10}

\bibitem{abdi2022packing}
A.~Abdi, G.~Cornu{\'e}jols, and M.~Zlatin.
\newblock On packing dijoins in digraphs and weighted digraphs.
\newblock {\em arXiv preprint arXiv:2202.00392}, 2022.

\bibitem{abolazimi2021matroid}
D.~Abdolazimi, A.~R. Karlin, N.~Klein, and S.~O. Gharan.
\newblock Matroid partition property and the secretary problem.
\newblock {\em arXiv preprint arXiv 2111.12436}, 2021.

\bibitem{aharoni2006intersection}
R.~Aharoni and E.~Berger.
\newblock The intersection of a matroid and a simplicial complex.
\newblock {\em Transactions of the American Mathematical Society},
  358(11):4895--4917, 2006.

\bibitem{aharoni2012edge}
R.~Aharoni, E.~Berger, and R.~Ziv.
\newblock The edge covering number of the intersection of two matroids.
\newblock {\em Discrete Mathematics}, 312(1):81--85, 2012.

\bibitem{berczi2022weighted}
K.~B{\'e}rczi, B.~M{\'a}trav{\"o}lgyi, and T.~Schwarcz.
\newblock Weighted exchange distance of basis pairs.
\newblock {\em arXiv preprint arXiv:2211.12750}, 2022.

\bibitem{berczi2021complexity}
K.~B{\'e}rczi and T.~Schwarcz.
\newblock Complexity of packing common bases in matroids.
\newblock {\em Mathematical Programming}, 188(1):1--18, 2021.

\bibitem{berczi2022exchange}
K.~B{\'e}rczi and T.~Schwarcz.
\newblock Exchange distance of basis pairs in split matroids.
\newblock {\em arXiv preprint arXiv:2203.01779}, 2022.

\bibitem{berczi2021list}
K.~B{\'{e}}rczi, T.~Schwarcz, and Y.~Yamaguchi.
\newblock List colouring of two matroids through reduction to partition
  matroids.
\newblock {\em {SIAM} Journal on Discrete Mathematics}, 35(3):2192--2209, 2021.

\bibitem{berge1970graphes}
C.~Berge.
\newblock {\em Graphes et hypergraphes}.
\newblock Dunod, Paris, 1970.

\bibitem{bonin2013basis}
J.~E. Bonin.
\newblock Basis-exchange properties of sparse paving matroids.
\newblock {\em Advances in Applied Mathematics}, 50(1):6--15, 2013.

\bibitem{bonin2016infinite}
J.~E. Bonin and T.~J. Savitsky.
\newblock An infinite family of excluded minors for strong base-orderability.
\newblock {\em Linear Algebra and its Applications}, 488:396--429, 2016.

\bibitem{brylawski1971combinatorial}
T.~H. Brylawski.
\newblock A combinatorial model for series-parallel networks.
\newblock {\em Transactions of the American Mathematical Society}, 154:1--22,
  1971.

\bibitem{cordovil1993bases}
R.~Cordovil and M.~L. Moreira.
\newblock Bases-cobases graphs and polytopes of matroids.
\newblock {\em Combinatorica}, 13(2):157--165, 1993.

\bibitem{davies1976disjoint}
J.~Davies and C.~McDiarmid.
\newblock Disjoint common transversals and exchange structures.
\newblock {\em Journal of the London Mathematical Society}, 2(1):55--62, 1976.

\bibitem{du1993hamiltonian}
D.~Du, D.~Hsu, and F.~Hwang.
\newblock The {H}amiltonian property of consecutive-{$d$} digraphs.
\newblock {\em Mathematical and Computer Modelling}, 17(11):61--63, 1993.

\bibitem{edmonds1973edge}
J.~Edmonds.
\newblock Edge-disjoint branchings.
\newblock In {\em Combinatorial Algorithms}. Academic Press, New York, 1973.

\bibitem{edmonds1965transversals}
J.~Edmonds and D.~R. Fulkerson.
\newblock Transversals and matroid partition.
\newblock {\em Journal of Research of the National Bureau of Standards (B)},
  69:147--153, 1965.

\bibitem{erdos1990some}
P.~Erd\H{o}s.
\newblock On some of my favourite problems in graph theory and block designs.
\newblock {\em Le Matematiche}, 45(1):61--74, 1990.

\bibitem{fleischner1992solution}
H.~Fleischner and M.~Stiebitz.
\newblock A solution to a colouring problem of {P. Erd{\H{o}}s}.
\newblock {\em Discrete Mathematics}, 101(1-3):39--48, 1992.

\bibitem{fleischner1997remarks}
H.~Fleischner and M.~Stiebitz.
\newblock Some remarks on the cycle plus triangles problem.
\newblock In {\em The Mathematics of Paul Erdös II}, pages 136--142. Springer,
  Berlin, 1997.

\bibitem{frank2011connections}
A.~Frank.
\newblock {\em Connections in Combinatorial Optimization}, volume~38 of {\em
  Oxford Lecture Series in Mathematics and its Applications}.
\newblock Oxford University Press, Oxford, 2011.

\bibitem{gabow1976decomposing}
H.~Gabow.
\newblock Decomposing symmetric exchanges in matroid bases.
\newblock {\em Mathematical Programming}, 10(1):271--276, 1976.

\bibitem{haxell2004strong}
P.~E. Haxell.
\newblock On the strong chromatic number.
\newblock {\em Combinatorics, Probability and Computing}, 13(6):857--865, 2004.

\bibitem{huang1994relations}
R.~Huang and G.-C. Rota.
\newblock On the relations of various conjectures on {L}atin squares and
  straightening coefficients.
\newblock {\em Discrete Mathematics}, 128(1-3):225--236, 1994.

\bibitem{ingleton1977transversal}
A.~W. Ingleton.
\newblock Transversal matroids and related structures.
\newblock In {\em Higher Combinatorics}, pages 117--131. Springer Netherlands,
  1977.

\bibitem{kajitani1988ordering}
Y.~Kajitani, S.~Ueno, and H.~Miyano.
\newblock Ordering of the elements of a matroid such that its consecutive w
  elements are independent.
\newblock {\em Discrete Mathematics}, 72(1-3):187--194, 1988.

\bibitem{konig1916graphen}
D.~K{\"o}nig.
\newblock {\"U}ber {G}raphen und ihre {A}nwendung auf {D}eterminantentheorie
  und {M}engenlehre.
\newblock {\em Mathematische Annalen}, 77(4):453--465, 1916.

\bibitem{kotlar2013circuits}
D.~Kotlar.
\newblock On circuits and serial symmetric basis-exchange in matroids.
\newblock {\em SIAM Journal on Discrete Mathematics}, 27(3):1274--1286, 2013.

\bibitem{kotlar2005partitioning}
D.~Kotlar and R.~Ziv.
\newblock On partitioning two matroids into common independent subsets.
\newblock {\em Discrete Mathematics}, 300(1-3):239--244, 2005.

\bibitem{kotlar2013serial}
D.~Kotlar and R.~Ziv.
\newblock On serial symmetric exchanges of matroid bases.
\newblock {\em Journal of Graph Theory}, 73(3):296--304, 2013.

\bibitem{leichter2022impossibility}
M.~Leichter, B.~Moseley, and K.~Pruhs.
\newblock On the impossibility of decomposing binary matroids.
\newblock {\em Operations Research Letters}, 50(5):623--625, 2022.

\bibitem{mcdonald2022strong}
J.~McDonald and G.~J. Puleo.
\newblock Strong coloring 2-regular graphs: Cycle restrictions and partial
  colorings.
\newblock {\em Journal of Graph Theory}, 100(4):653--670, 2022.

\bibitem{oxley2011matroid}
J.~Oxley.
\newblock {\em Matroid Theory}, volume~21 of {\em Oxford Graduate Texts in
  Mathematics}.
\newblock Oxford University Press, Oxford, second edition, 2011.

\bibitem{oxley1987characterization}
J.~G. Oxley.
\newblock A characterization of the ternary matroids with no {$M(K_4)$}-minor.
\newblock {\em Journal of Combinatorial Theory, Series B}, 42(2):212--249,
  1987.

\bibitem{schrijver2003combinatorial}
A.~Schrijver.
\newblock {\em Combinatorial Optimization: Polyhedra and Efficiency}.
\newblock Springer, Berlin, 2003.

\bibitem{sims1977complete}
J.~A. Sims.
\newblock A complete class of matroids.
\newblock {\em The Quartefrly Journal of Mathematics}, 28(4):449--451, 1977.

\bibitem{welsh1976matroid}
D.~J.~A. Welsh.
\newblock {\em Matroid Theory}.
\newblock Academic Press, London, 1976.

\bibitem{wiedemann1984cyclic}
D.~Wiedemann.
\newblock Cyclic base orders of matroids.
\newblock Manuscript, 1984.

\bibitem{woodall1978menger}
D.~R. Woodall.
\newblock Menger and {K}{\H{o}}nig systems.
\newblock In {\em Theory and Applications of Graphs}, pages 620--635. Springer,
  Berlin, 1978.

\end{thebibliography}

\end{document}